\newtheorem{theorem}{Theorem}
\newtheorem*{theorem*}{Theorem}
\newtheorem{lemma}[theorem]{Lemma}
\newtheorem*{lemma*}{Lemma}
\newtheorem{proposition}[theorem]{Proposition}
\newtheorem*{proposition*}{Proposition}
\newtheorem{corollary}[theorem]{Corollary}
\newtheorem*{corollary*}{Corollary}
\newtheorem*{definition*}{Definition}
\newtheorem*{notation*}{Notation}
\newtheorem{remark}[theorem]{Remark}
\newtheorem*{remark*}{Remark}
\newtheorem{example}[theorem]{Example}
\newtheorem*{example*}{Example}
\newtheorem*{observation*}{Observation}
\newcommand{\R}{{\mathbb R}}
\newcommand{\C}{{\mathbb C}}
\renewcommand{\emph}[1]{\underline{\bf{#1}}}    
\newtheorem{cor}[theorem]{Corollary}
\theoremstyle{definition}
\newcommand{\henrik}[1]{\endnote{\textcolor{red}{#1}}}
\title{$L^2$-Betti numbers and Plancherel measure}
\author{Henrik Densing Petersen}
\address{Henrik Densing Petersen, SB-MATHGEOM-EGG, EPFL, Station 8, CH-1015, Lausanne, Switzerland}
\email{henrik.petersen@epfl.ch}
\author{Alain Valette}
\address{Alain Valette, Institut de Math{\'e}matiques, Universit{\'e} de Neuch{\^a}tel, Unimail, 11 Rue Emile Argand, CH-2000 Neuch{\^a}tel, Switzerland}
\email{alain.valette@unine.ch}
\begin{document}


\begin{abstract}
We compute $L^2$-Betti numbers of postliminal, locally compact, unimodular groups in terms of ordinary dimensions of reduced cohomology with coefficients in irreducible unitary representations and the Plancherel measure. This allows us to compute the $L^2$-Betti numbers for semi-simple Lie groups with finite center, simple algebraic groups over local fields, and automorphism groups of locally finite trees acting transitively on the boundary.
\end{abstract}

\maketitle

\section*{Introduction}
$L^2$-Betti numbers are powerful numerical invariants attached to discrete groups, and have been the subject of intense investigation (see e.g. \cite{At76,CG86,Lu02}). For a countable discrete group $\Gamma$ the $L^2$-Betti numbers $\beta^n_{(2)}(\Gamma)$ are computed as the (extended) von Neumann dimension of the right-$L\Gamma$-modules $H^n(\Gamma,\ell^2\Gamma)$. The framework for $L^2$-Betti numbers set up in \cite{Lu02} was extended by the first author \cite{Pe11} to cover locally compact, unimodular, second countable (henceforth abbreviated 'lcus') groups; it was also established that whenever such a group $G$ is totally disconnected and/or admits a cocompact lattice, then for any lattice $\Gamma$ in $G$ and all $n\geq 0$,
\begin{equation}
\beta^n_{(2)}(\Gamma) = \operatorname{covol}(\Gamma)\cdot \beta^n_{(2)}(G). \nonumber
\end{equation}
This result was extended in \cite[Theorem B]{KPV13} to cover any lattice in any lcus group.

In this article we show that whenever the locally compact group $G$ has a Plancherel measure on the unitary dual, the $L^2$-Betti numbers can be expressed explicitly in terms of this.

Recall \cite[13.9.4]{Dix69} that a separable locally compact group $G$ is {\bf postliminal} if, for every irreducible unitary representation $\pi$ of $G$, the norm closure $\overline{\pi(L^1G)}^{\lVert \cdot \rVert} \subseteq \mathcal{B}(\mathcal{H}_{\pi})$ contains the compact operators on $\mathcal{H}_{\pi}$. We abbreviate by 'plcus' the sentence 'postliminal, locally compact, unimodular, second countable'.

By \cite[Theorem 18.8.1]{Dix69} the left-regular representation of a plcus group $G$ can be completely disintegrated over the dual $\hat{G}$ in terms of irreducible unitary representations, with respect to the Plancherel measure $\mu$ on $\hat{G}$: more precisely, $L^2(G)$ decomposes as $\int_{\hat{G}}^\oplus \mathcal{H}_\omega\bar{\otimes}\mathcal{H}_{\overline{\omega}}\; d\mu(\omega)$, the left regular representation decomposes as $\int_{\hat{G}}^\oplus (\omega\otimes 1)\;d\mu(\omega)$; and the right regular representation decomposes as $\int_{\hat{G}}^\oplus (1\otimes\overline{\omega})\;d\mu(\omega)$. We denote by $H^n(G,\mathcal{H}), n\in \mathbb{N}_0$ the $n$th continuous cohomology of the locally compact group $G$ with coefficients in a continuous $G$-module $\mathcal{H}$; this carries a not necessarily Hausdorff vector topology and we denote by $\underline{H}^n(G,\mathcal{H})$ the maximal Hausdorff quotient. More detailed definitions are given below. Our main result is the following:

\begin{theorem} \label{thm:main_compute}
Let $G$ be a plcus group with a fixed Haar measure and corresponding Plancherel measure $\mu$ on the unitary dual $\hat{G}$ of $G$. Let $\hat{G}\owns \omega \mapsto \mathcal{H}_{\omega}$ be a measurable field of representatives of equivalence classes of irreducible unitary representations. Then for all $n\geq 0$
\begin{equation}
\beta^n_{(2)}(G) = \int_{\hat{G}} \dim_{\mathbb{C}} \underline{H}^n(G,\mathcal{H}_{\omega}) \mathrm{d}\mu(\omega). \nonumber
\end{equation}
\end{theorem}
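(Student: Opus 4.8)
The plan is to reduce the statement to a fibrewise von Neumann dimension computation and to carry out that computation on the Plancherel disintegration supplied by \cite[Theorem 18.8.1]{Dix69}. Recall that by definition $\beta^n_{(2)}(G) = \dim_{L(G)} \underline{H}^n(G, L^2(G))$, where continuous cohomology is taken with respect to the left regular representation and the module structure over the group von Neumann algebra $L(G)$ is induced by the commuting right regular representation. Under the decomposition $L^2(G) \cong \int_{\hat G}^\oplus \mathcal H_\omega\bar\otimes\mathcal H_{\bar\omega}\,d\mu(\omega)$, the right regular representation is $\int_{\hat G}^\oplus 1\otimes\bar\omega\,d\mu(\omega)$, so that, each $\omega$ being irreducible,
\[
L(G) \cong \int_{\hat G}^\oplus \mathcal B(\mathcal H_{\bar\omega})\,d\mu(\omega),
\]
the algebra acting on the second tensor factor of each fibre. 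The first thing I would pin down is that the Plancherel measure is normalised precisely so that the canonical trace $\tau$ on $L(G)$ disintegrates as $\tau = \int_{\hat G}^\oplus \operatorname{Tr}_{\mathcal H_{\bar\omega}}\,d\mu(\omega)$, with $\operatorname{Tr}_{\mathcal H_{\bar\omega}}$ the \emph{unnormalised} canonical trace on each type I factor $\mathcal B(\mathcal H_{\bar\omega})$; this is the content of the Plancherel theorem for the fixed Haar measure.

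Next I would establish the key disintegration of cohomology
\[
\underline{H}^n(G, L^2(G)) \cong \int_{\hat G}^\oplus \underline{H}^n\!\big(G, \mathcal H_\omega\bar\otimes\mathcal H_{\bar\omega}\big)\,d\mu(\omega)
\]
as a measurable field of $\mathcal B(\mathcal H_{\bar\omega})$-modules, and then identify the fibres. Since $G$ acts on $\mathcal H_\omega\bar\otimes\mathcal H_{\bar\omega}$ only through the first factor, the second factor is an inert multiplicity space and
\[
\underline{H}^n\!\big(G,\mathcal H_\omega\bar\otimes\mathcal H_{\bar\omega}\big) \cong \underline{H}^n(G,\mathcal H_\omega)\,\bar\otimes\,\mathcal H_{\bar\omega},
\]
with $\mathcal B(\mathcal H_{\bar\omega})$ still acting on the right-hand tensor factor. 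Concretely I would fix a functorial strong resolution computing $H^\bullet(G,-)$ (for instance by continuous homogeneous cochains $G^{n+1}\to\mathcal H$), check that it disintegrates over $\hat G$ into the corresponding fibre complexes, and verify that passing to the maximal Hausdorff quotient commutes with the direct integral.

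Granting the disintegration, the computation becomes fibrewise. The fibre module $\underline{H}^n(G,\mathcal H_\omega)\bar\otimes\mathcal H_{\bar\omega}$ is a $\dim_{\mathbb C}\underline H^n(G,\mathcal H_\omega)$-fold amplification of the standard module $\mathcal H_{\bar\omega}$ of $\mathcal B(\mathcal H_{\bar\omega})$, and the standard module has von Neumann dimension $1$ for the canonical trace (indeed $\mathcal H_{\bar\omega}\cong \mathcal B(\mathcal H_{\bar\omega})\,e$ for a minimal projection $e$, whose trace is $1$). Hence the fibre dimension is
\[
\dim_{\mathcal B(\mathcal H_{\bar\omega})}\!\big(\underline{H}^n(G,\mathcal H_\omega)\bar\otimes\mathcal H_{\bar\omega}\big) = \dim_{\mathbb C}\underline{H}^n(G,\mathcal H_\omega),
\]
valid in $[0,\infty]$. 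Applying additivity of the extended von Neumann dimension over direct integrals, namely $\dim_{L(G)}\int_{\hat G}^\oplus V_\omega\,d\mu(\omega) = \int_{\hat G}\dim_{\mathcal B(\mathcal H_{\bar\omega})} V_\omega\,d\mu(\omega)$, then yields exactly the asserted formula. As a sanity check, for $G$ compact with total mass $1$ the measure gives each $\omega$ weight $\dim\mathcal H_\omega$, and the formula returns $\beta^0_{(2)}(G)=1$ with all higher numbers vanishing, as it should.

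The hard part will be the cohomological disintegration and, inside it, the interaction of the maximal Hausdorff quotient (the closure of the coboundaries) with the direct integral. One must show both that $\omega\mapsto \underline H^n(G,\mathcal H_\omega)$ is a genuine measurable field of Hilbert spaces and that the closure of the total space of coboundaries decomposes as the direct integral of the fibrewise closures; a priori $\overline{\int^\oplus B^n_\omega}$ could be strictly larger than $\int^\oplus \overline{B^n_\omega}$. I would address this by working with a cochain model whose differentials form a measurable field of bounded Hilbert-space operators, so that ranges and their closures can be controlled measurably in $\omega$, invoking the measurable selection and decomposition theorems available for second countable $G$; the same machinery delivers the fibre identification $\underline H^n(G,\mathcal H_\omega\bar\otimes\mathcal H_{\bar\omega})\cong\underline H^n(G,\mathcal H_\omega)\bar\otimes\mathcal H_{\bar\omega}$ even when $\mathcal H_{\bar\omega}$ is infinite-dimensional, where a naive tensoring argument is unavailable. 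The remaining points—measurability of the Plancherel field, the trace normalisation, and additivity of $\dim$ over direct integrals—are either part of the Plancherel theorem already quoted or belong to the extended dimension theory of \cite{Pe11,Lu02}, and I expect them to go through without essential difficulty.
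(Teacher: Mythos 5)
Your architecture --- disintegrate $L^2(G)$ over $\hat G$, identify the fibrewise module, then integrate dimensions --- has the right shape, and is in fact close to the conceptual reformulation recorded in the paper as a remark (the corner/projection argument suggested by Vaes). But the two steps you yourself flag as ``the hard part'' are not loose ends that standard machinery closes; they \emph{are} the theorem, and the route you propose for them does not exist as stated. There is no cochain model for $H^\bullet(G,-)$ of a noncompact $G$ whose differentials are bounded operators between Hilbert spaces: the cochain spaces $L^2_{loc}(G^n,\mathcal H)$ are Fr\'echet spaces, projective limits of the Hilbert spaces $L^2(K,\mathcal H)$ over exhausting compacts (and one needs nested compacts compatible with multiplication even to restrict the differentials, as in the paper's choice of the sets $K^l_{(m)}$). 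One can disintegrate each compact level, where kernels and closures of ranges of bounded operators do decompose; but the complex computing $H^n(G,L^2G)$ is the projective limit of these levels, and reduced cohomology does not commute with projective limits --- the paper only obtains an \emph{embedding} of $\underline H^n(G,Q(L^2G))$ into $\varprojlim(Z_{(m)}\ominus B_{(m)})$, which is usable only because it is combined with a density argument. Consequently the fibres $\underline H^n(G,\mathcal H_\omega)$ and $\underline H^n(G,\mathcal H_\omega\bar\otimes\mathcal H_{\bar\omega})$ are not Hilbert spaces, your ``direct integral of reduced cohomologies'' is not a defined object, and the additivity formula $\dim_{LG}\int^\oplus V_\omega\,d\mu=\int\dim V_\omega\,d\mu$ you invoke is not an available result for such fields: what exists is countable additivity over central projections (\cite[Proposition 3.10]{Pe11}, which is why the paper partitions $\hat G$ into the sets $\Omega_k$ where the fibre dimension equals $k$) together with the computation $\dim_{(LG,\psi)}\int_\Omega^\oplus\mathcal H_{\bar\omega}\,d\mu=\mu(\Omega)$ of Lemma \ref{lma:formal_dimension}.

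Second, your claimed isomorphism $\underline H^n(G,L^2G)\cong\int^\oplus_{\hat G}\underline H^n(G,\mathcal H_\omega\bar\otimes\mathcal H_{\bar\omega})\,d\mu(\omega)$ conflates two containments about closures of coboundaries of very different difficulty. The easy direction (a global limit of coboundaries is fibrewise a limit of coboundaries a.e., via a subsequence argument) yields only the inequality $\beta^n_{(2)}(G)\geq\int\dim_{\mathbb C}\underline H^n(G,\mathcal H_\omega)\,d\mu$; this is the content of the paper's injectivity Lemma \ref{lma:lessthan}. The converse --- that fibrewise approximability by coboundaries can be patched, measurably and with uniform control in $\omega$, into global approximability --- is where the real work lies, and generic ``measurable selection theorems'' do not deliver it. The paper's proof rests on an input you do not supply: the existence of countably many measurable cross-sections of cocycles whose classes are fibrewise linearly independent with dense span in $\underline H^n(G,\mathcal H_\omega)$ (Lemma \ref{lma:cohom_cross_section}), whose proof requires the duality of reduced cohomology with continuous homology --- this is precisely where reducedness is essential and cannot be finessed, since the unreduced analogue of the theorem is false (cf.\ the Borel example in the paper's remark on $rk(G)\neq rk(K)$). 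With those sections in hand, the upper bound follows from density of the span of the modules $E_{\alpha_i}$ plus the projective-limit dimension bound. As written, your step establishing the disintegration assumes essentially the statement to be proved.
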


Observe the subtlety that we need to take reduced cohomology. Note however, that whenever $\dim_{\mathbb{C}} H^n(G,\mathcal{H}_{\omega}) < \infty$, the cohomology is Hausdorff \cite[Proposition III.2.4]{Gu80}. 

Recall that a unitary irreducible representation $\omega$ of $G$ is {\bf square-integrable} if it appears as a sub-representation of the regular representation of $G$ on $L^2(G)$. The set of (equivalence classes of) square-integrable representations is called the {\bf discrete series} of $G$ and is denoted by $\hat{G}_d$. Recall that $\hat{G}_d$ is exactly the atomic part of the Plancherel measure $\mu$, and that for a discrete series representation $\omega$, the measure $\mu(\omega)$ is the {\bf formal dimension} of $\omega$, see \cite[Proposition 18.8.5]{Dix69}. 


In case $G$ admits a discrete series of square-integrable irreducible representations, we can refine the argument of Theorem \ref{thm:main_compute} slightly by proving that, for square-integrable irreducible representations $\omega$, the equality $\dim_{\mathbb{C}}H^n(G,\mathcal{H}_\omega)=\dim_{\mathbb{C}}\underline{H}^n(G,\mathcal{H}_\omega)$ holds; as we mentioned earlier, this implies that if $\dim_{\mathbb{C}} H^n(G,\mathcal{H}_\omega)$ is finite, then $H^n(G,\mathcal{H}_\omega)$ is reduced\footnote{For $n=1$, this follows from general principles: using the fact that $\omega$ maps the group $C^*$-algebra of $G$ to the compact operators, we deduce that $\{\omega\}$ is a closed point in $\hat{G}$, hence $H^1(G,\mathcal{H}_\omega)$ is Hausdorff, by a well-known criterion of Guichardet.}. So Theorem \ref{thm:main_compute} implies the following:

\begin{theorem} \label{thm:DS_modification}
Let $G$ be a plcus group.
\begin{equation}
\beta^n_{(2)}(G) = \left( \sum_{\omega \in \hat{G}_d} \mu(\omega) \cdot \dim_{\mathbb{C}} H^n(G,\mathcal{H}_{\omega}) \right) +  \int_{\hat{G}\backslash\hat{G}_d} \dim_{\mathbb{C}} \underline{H}^n(G,\mathcal{H}_{\omega}) \mathrm{d}\mu(\omega). \nonumber
\end{equation}
So, if $\beta^n_{(2)}(G)<\infty$, then $card\{ \omega \in \hat{G}_d \mid H^n(G,\mathcal{H}_{\omega}) \neq 0\} < \infty$, and $\mu\{ \omega \in \hat{G} \mid \underline{H}^n(G,\mathcal{H}_{\omega}) \neq 0\}<\infty$, and $\mu\{\omega\in\hat{G}: \dim\underline{H}^n(G,\mathcal{H}_\omega)=\infty\}=0$.
\end{theorem}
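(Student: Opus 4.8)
The plan is to read off all three assertions from Theorem~\ref{thm:main_compute} by decomposing $\hat G$ into its atomic and non-atomic parts $\hat G = \hat G_d \sqcup (\hat G \setminus \hat G_d)$. First I would split the integral of Theorem~\ref{thm:main_compute} as
\[
\beta^n_{(2)}(G) = \int_{\hat G_d} \dim_{\mathbb C} \underline H^n(G,\mathcal H_\omega)\,\mathrm{d}\mu(\omega) + \int_{\hat G \setminus \hat G_d} \dim_{\mathbb C} \underline H^n(G,\mathcal H_\omega)\,\mathrm{d}\mu(\omega).
\]
Because $\hat G_d$ is exactly the atomic part of $\mu$, with the mass at a square-integrable $\omega$ equal to its formal dimension $\mu(\omega)$, the non-atomic part of $\mu$ assigns measure zero to the countable set $\hat G_d$, so the first integral collapses to $\sum_{\omega \in \hat G_d} \mu(\omega)\,\dim_{\mathbb C}\underline H^n(G,\mathcal H_\omega)$. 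Finally I would apply the refinement recorded just above the statement, namely $\dim_{\mathbb C} H^n(G,\mathcal H_\omega) = \dim_{\mathbb C}\underline H^n(G,\mathcal H_\omega)$ for square-integrable $\omega$, to replace reduced cohomology by ordinary cohomology inside this sum. This produces the displayed formula.

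The two measure-theoretic conclusions then follow directly from the integral form of Theorem~\ref{thm:main_compute} together with $\beta^n_{(2)}(G) < \infty$, using only nonnegativity of the integrand. If the set $A = \{\omega : \dim_{\mathbb C}\underline H^n(G,\mathcal H_\omega) = \infty\}$ had positive $\mu$-measure, the integrand would equal $+\infty$ on a set of positive measure and force $\beta^n_{(2)}(G) = \infty$; hence $\mu(A) = 0$. For the set $B = \{\omega : \underline H^n(G,\mathcal H_\omega) \neq 0\}$, on which the integrand is at least $1$, a Chebyshev-type estimate gives $\mu(B) \le \int_{\hat G} \dim_{\mathbb C}\underline H^n(G,\mathcal H_\omega)\,\mathrm{d}\mu(\omega) = \beta^n_{(2)}(G) < \infty$.

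The finiteness of the discrete-series count is the only assertion that needs more than nonnegativity, and I expect it to be the main obstacle. From the sum in the formula I immediately get $\sum_{\omega \in \hat G_d} \mu(\omega)\,\dim_{\mathbb C} H^n(G,\mathcal H_\omega) \le \beta^n_{(2)}(G) < \infty$, and since $\dim_{\mathbb C} H^n(G,\mathcal H_\omega) \ge 1$ precisely when $H^n(G,\mathcal H_\omega) \neq 0$, this yields $\sum_{\omega \,:\, H^n \neq 0} \mu(\omega) < \infty$. Summability of these masses shows the set is countable but does not by itself bound its cardinality, so the real work is to exclude an infinite sequence of discrete series with formal dimensions tending to $0$ yet nonzero $n$-th cohomology. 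The key lemma I would prove is therefore a positive lower bound $\varepsilon > 0$ for the formal dimensions on $\{\omega \in \hat G_d : H^n(G,\mathcal H_\omega) \neq 0\}$; granting it, the set $\{\omega : \mu(\omega)\,\dim_{\mathbb C} H^n(G,\mathcal H_\omega) \ge \varepsilon\}$ has at most $\beta^n_{(2)}(G)/\varepsilon$ elements and contains every $\omega$ with nonvanishing cohomology, which gives the stated finite count.
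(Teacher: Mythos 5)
Your derivation of the displayed formula has the same skeleton as the paper's (split the Plancherel integral of Theorem \ref{thm:main_compute} into its atomic part $\hat{G}_d$ and its complement), but it assumes exactly the one assertion that carries all the mathematical content: the equality $\dim_{\mathbb{C}} H^n(G,\mathcal{H}_{\omega})=\dim_{\mathbb{C}} \underline{H}^n(G,\mathcal{H}_{\omega})$ for square-integrable $\omega$. The ``refinement recorded just above the statement'' is not a citable prior result; it is an announcement of what the authors themselves prove, and the paper's entire proof of Theorem \ref{thm:DS_modification} consists of establishing precisely this proposition. When $\dim_{\mathbb{C}} H^n(G,\mathcal{H}_{\omega})<\infty$ the equality does follow from \cite[Proposition III.2.4]{Gu80}, but the infinite-dimensional case requires a genuine argument: the paper shows, by the same mechanism as Lemma \ref{lma:lessthan}, that $H^n(G,\mathcal{H}_{\omega})\otimes\mathcal{H}_{\bar{\omega}}$ embeds into $H^n(G,\mathcal{H}_{\omega}\bar{\otimes}\mathcal{H}_{\bar{\omega}})$ as right-$LG$-modules, hence $\dim_{LG}H^n(G,\mathcal{H}_{\omega}\bar{\otimes}\mathcal{H}_{\bar{\omega}})=\infty$; it then passes to reduced cohomology via \cite[Theorem A]{KPV13}, and back to $\dim_{\mathbb{C}}\underline{H}^n(G,\mathcal{H}_{\omega})=\infty$ through the proof of Theorem \ref{thm:main_compute}. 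Without supplying this step, or some substitute for it, your proposal does not prove anything beyond Theorem \ref{thm:main_compute} itself.

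Concerning the three consequences: your Chebyshev argument for $\mu\{\omega : \underline{H}^n(G,\mathcal{H}_{\omega})\neq 0\}<\infty$ and for $\mu\{\omega : \dim_{\mathbb{C}}\underline{H}^n(G,\mathcal{H}_{\omega})=\infty\}=0$ is correct and is what the paper intends. For the counting statement you raise a legitimate logical point: summability of $\mu(\omega)\dim_{\mathbb{C}}H^n(G,\mathcal{H}_{\omega})$ over $\hat{G}_d$, with each $\mu(\omega)>0$, gives countability of the set of $\omega$ with nonvanishing cohomology but not finiteness; the paper deduces finiteness without comment, implicitly relying on a positive lower bound for the relevant formal dimensions. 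However, your proposed repair is only a plan: the ``key lemma'' (a uniform $\varepsilon>0$ bounding $\mu(\omega)$ from below on $\{\omega\in\hat{G}_d : H^n(G,\mathcal{H}_{\omega})\neq 0\}$) is stated but not proved, neither the paper nor its references establish such a bound for a general plcus group, and it is not clear how you would obtain it. So this part of the proposal trades the assertion to be proved for another unproven assertion, and as written the attempt is incomplete on both counts.
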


A sufficient condition for finiteness of $\beta^n_{(2)}(G)$, is that $G$ contains a lattice $\Gamma$ with $\beta^n_{(2)}(\Gamma)<\infty$ (this holds e.g. if $\Gamma$ admits a classifying space with finite $n$-skeleton). Also, $\beta^1_{(2)}(G) <\infty$ as soon as $G$ is compactly generated, see Proposition 4.6 of \cite{KPV13}.

From Corollary E of \cite{KPV13}, we immediately get:

\begin{corollary} \label{cor:conclusions}
Let $G$ be a plcus group. 

If $G$ admits a non-compact, closed, normal, amenable subgroup then $\underline{H}^n(G,\mathcal{H}_{\omega}) = \{ 0 \}$ for almost every $\omega\in \hat{G}$. In particular, $G$ admits no square integrable irreducible unitary representations with non-zero cohomology.

\end{corollary}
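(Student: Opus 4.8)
The plan is to deduce Corollary \ref{cor:conclusions} directly from Theorem \ref{thm:main_compute} (or equivalently Theorem \ref{thm:DS_modification}) combined with the cited Corollary E of \cite{KPV13}. The essential mechanism is that Theorem \ref{thm:main_compute} expresses each $L^2$-Betti number $\beta^n_{(2)}(G)$ as the integral over $\hat{G}$ of the function $\omega \mapsto \dim_{\mathbb{C}} \underline{H}^n(G,\mathcal{H}_\omega)$ against the Plancherel measure $\mu$. Since this integrand is non-negative, the integral vanishes if and only if the integrand is zero $\mu$-almost everywhere. So to conclude that $\underline{H}^n(G,\mathcal{H}_\omega) = \{0\}$ for $\mu$-almost every $\omega$, it suffices to show that $\beta^n_{(2)}(G) = 0$ for every $n \geq 0$ under the hypothesis.

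First I would invoke Corollary E of \cite{KPV13}, which should assert precisely that a plcus (or more generally lcus) group $G$ possessing a non-compact, closed, normal, amenable subgroup has all of its $L^2$-Betti numbers equal to zero, i.e. $\beta^n_{(2)}(G) = 0$ for all $n \geq 0$. This is the locally compact analogue of the classical fact that a discrete group with an infinite normal amenable subgroup has vanishing $L^2$-Betti numbers; the non-compactness of the subgroup plays the role of infiniteness in the discrete setting. Granting this, Theorem \ref{thm:main_compute} gives
\begin{equation}
0 = \beta^n_{(2)}(G) = \int_{\hat{G}} \dim_{\mathbb{C}} \underline{H}^n(G,\mathcal{H}_\omega)\, \mathrm{d}\mu(\omega), \nonumber
\end{equation}
and since the integrand is a non-negative measurable function, it must vanish $\mu$-almost everywhere, proving the first assertion.

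For the final sentence, I would use the characterization recalled in the excerpt that the discrete series $\hat{G}_d$ is exactly the atomic part of the Plancherel measure $\mu$, so every discrete series representation $\omega$ carries strictly positive mass $\mu(\omega) > 0$ (its formal dimension). A single atom is therefore not a $\mu$-null set. Consequently the $\mu$-almost-everywhere vanishing of $\underline{H}^n(G,\mathcal{H}_\omega)$ forces $\underline{H}^n(G,\mathcal{H}_\omega) = \{0\}$ at every atom, i.e. for every square-integrable irreducible $\omega \in \hat{G}_d$. Finally, as recalled after Theorem \ref{thm:DS_modification}, for square-integrable $\omega$ one has $\dim_{\mathbb{C}} H^n(G,\mathcal{H}_\omega) = \dim_{\mathbb{C}} \underline{H}^n(G,\mathcal{H}_\omega)$, so the reduced cohomology vanishing upgrades to ordinary cohomology vanishing, giving the claim that $G$ admits no square-integrable irreducible unitary representation with non-zero cohomology.

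I do not expect a genuine obstacle here, since the corollary is a formal consequence of the main theorem and an external vanishing result; the only point requiring care is the logical passage from an almost-everywhere statement (which is what the integral formula yields) to a pointwise statement on the atoms of $\mu$, and this is handled precisely by the observation that atoms have positive measure. The mild subtlety worth stating explicitly is that the continuous-to-discrete dictionary behind Corollary E of \cite{KPV13} must apply to the postliminal setting, but since plcus is a special case of lcus this is immediate.
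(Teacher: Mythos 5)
Your proposal is correct and follows essentially the same route as the paper, which simply deduces the corollary ``immediately'' from Corollary E of \cite{KPV13} (vanishing of all $\beta^n_{(2)}(G)$ under the hypothesis) combined with Theorem \ref{thm:main_compute}. Your explicit handling of the two details the paper leaves implicit --- passing from the a.e.\ statement to the atoms of $\mu$, and upgrading reduced to ordinary cohomology for square-integrable $\omega$ via the equality $\dim_{\mathbb{C}} H^n(G,\mathcal{H}_\omega) = \dim_{\mathbb{C}} \underline{H}^n(G,\mathcal{H}_\omega)$ --- is exactly what the intended argument requires.
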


This applies in particular to unimodular, real algebraic groups with non-compact radical (these are plcus, by a result of Dixmier \cite{Dix57}), and to reductive $p$-adic algebraic groups with non-compact center (those are plcus, by a result of Bernstein \cite{Ber74}).

Theorem \ref{thm:DS_modification} allows us to compute the $L^2$-Betti numbers of semi-simple Lie groups with finite center (Corollary \ref{ssgroups}), simple algebraic groups over local fields (Corollary\ref{algebraic}), and automorphism groups of locally finite trees acting transitively on the boundary (Corollary \ref{trees}).

\subsection*{Notation and conventions}
We denote by $\bar{\mathbb{N}}_0$ the set $\mathbb{N}\cup \{0\}Ê\cup \{\infty\}$ and for extended-real numbers adopt the usual convention in stating equalities that $0\cdot \infty=0$.

Countable sets are those that embed in the natural numbers, i.e.~this designation includes finite sets.

\subsection*{Acknowledgements}
We are very grateful to Pierre-Emmanuel Caprace, Damien Gaboriau, Claudio Nebbia and Stefaan Vaes for useful conversations and/or comments on drafts of this paper.

\section*{Preliminaries}
Let $G$ be a lcus group. For all $n\in \mathbb{N}_0$ the (continuous) cohomology $H^n(G,E)$ of $G$ with coefficients in a complete locally convex topological vector space $E$ is the $n$'th cohomology, $H^n(G,E) := \ker \partial^n / \operatorname{im} \partial^{n-1}$, of the complex
\begin{displaymath}
\xymatrix{ 0 \ar[r] & E \ar[r]^<<<<{\partial^0} & L^2_{loc}(G,E) \ar[r]^<<<<{\partial^1} & \cdots }
\end{displaymath}
where $L^2_{loc}(G^n,E)$ is the locally convex space of locally square integrable functions $f\colon G^n \rightarrow E$, in the sense that for every compact subset $K$ of $G^n$ the restriction $f\vert_K$ is square integrable with respect to the Haar measure on $G$ and every continuous semi-norm on $E$. The spaces $L^2_{loc}$ are endowed with the projective topologies induced by restriction maps to compact subsets and the cohomologies then inherit canonically vector topologies, which we emphasize need not be Hausdorff. For extensive background on continuous cohomology we refer to the monographs \cite{BW80,Gu80}.

We denote by $\lambda, \rho$ the left- respectively right-regular representations of $G$ on $L^2G$, and by $LG$ respectively $RG$ the von Neumann algebras $LG:= \lambda(G)''$ and $RG:=\rho(G)''$. Recall that $LG$ and $RG$ are anti-isomorphic, and so we will generally prefer to think of $L^2G$ as a right-$LG$-module instead of a left-$RG$-module. The group von Neumann algebra $LG$ is semi-finite and carries a canonical (given a fixed scaling of the Haar measure) semi-finite, faithful, normal tracial weight $\psi$ (see e.g.~\cite[Theorem 7.2.7]{Gert}), given on the positive part of $LG$ by $\psi(x^*x)=\int_G |f(g)|^2\;dg$, where $x$ is left convolution by $f\in L^2G$.

For any right-$LG$-module $E$, in the algebraic sense that we require just $E$ to be a module over the ring $LG$ with no assumptions involving the topology on $LG$, there is a notion of $LG$-dimension $\dim_{(LG,\psi)} E$ which in particular extends the von Neumann dimension when $E$ is a closed invariant subspace of $\ell^2(\mathbb{N})\bar{\otimes}ÊL^2G$. For details on the dimension function, which is a semi-finite generalization of the dimension function defined by L{\"u}ck \cite{Lu98I,Lu98II}, we refer to \cite[Appendix B]{Pe11} or for a streamlined approach to \cite[Appendix A]{KPV13}.

In particular, the right-$LG$-module structure on $L^2G$ induces naturally a right-$LG$-module structure on $H^n(G,L^2G)$ for all $n$ and the $L^2$-Betti numbers of $G$ are defined \cite{Pe11} by
\begin{equation}
\beta^n_{(2)}(G) := \dim_{(LG,\psi)} H^n(G,L^2G). \nonumber
\end{equation}

This coincides with previous definitions \cite{CG86,Lu02} in the case where $G$ is countable discrete. We remark also that the $\beta^n_{(2)}(G)$ depend, through the dimension function, on the choice of scaling of the Haar measure on $G$, though we generally suppress this in the notation.

As we noted above the cohomologies $H^n(G,L^2G)$ need not be Hausdorff in general - in fact for $n=1$ and $G$ non-compact, the cohomology is Hausdorff if and only if $G$ is non-amenable - and we will consider in general also the \emph{reduced $L^2$-cohomology}, denoted $\underline{H}^n(G,E)$, which is by definition the (maximal) Hausdorff quotient of $H^n(G,E)$ for any complete, locally convex topological vector space $E$. It is a key result of \cite{KPV13} that in fact
\begin{equation}
\beta^n_{(2)}(G) = \dim_{(LG,\psi)} \underline{H}^n(G,L^2G). \nonumber
\end{equation}

We refer to \cite[Chapter 18]{Dix69} for background on the disintegration theory for the regular representation of plcus groups. The following lemma, describing how to compute the $LG$-dimensions of certain invariant subspaces of $L^2G$, follows directly from \cite[theorem 18.8.1]{Dix69}.

\begin{lemma} \label{lma:formal_dimension}
Let $G$ be a plcus group and fix some Haar measure and corresponding Plancherel measure $\mu$ on $\hat{G}$. Let $\Omega \subseteq \hat{G}$ be a measurable subset and choose a cross section $\xi \colon \Omega \rightarrow \mathcal{H}_{\omega}$ of unit vectors. Then 
\begin{equation}
\int_{\Omega}^{\oplus} \mathcal{H}_{\bar{\omega}} \mathrm{d}\mu(\omega) \cong \int_{\Omega}^{\oplus} \xi(\omega) \otimes \mathcal{H}_{\bar{\omega}} \mathrm{d}\mu(\omega) \subseteq L^2G \nonumber
\end{equation}
is canonically a right-$LG$-module and
\begin{equation}
\dim_{(LG,\psi)}\int_{\Omega}^{\oplus} \mathcal{H}_{\bar{\omega}} \mathrm{d}\mu(\omega) = \mu(\Omega). \nonumber
\end{equation}
\phantom{a} \hfill \qedsymbol
\end{lemma}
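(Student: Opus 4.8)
The plan is to realize the given subspace as a $\rho(G)$-invariant (hence right-$LG$-) subspace of $L^2G$, to rewrite its $LG$-dimension as the value of the Plancherel trace $\psi$ on the associated projection, and then to evaluate that trace fiber by fiber through the Plancherel disintegration.

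First I would check the isomorphism and the module structure. Since $\xi(\omega)$ is a unit vector, the fiberwise map $v\mapsto \xi(\omega)\otimes v$ is an isometry of $\mathcal{H}_{\bar\omega}$ onto $\xi(\omega)\otimes\mathcal{H}_{\bar\omega}=\mathbb{C}\xi(\omega)\otimes\mathcal{H}_{\bar\omega}$, and integrating over $\Omega$ (using measurability of $\xi$ to obtain an isomorphism of measurable fields) gives the stated unitary $\int_\Omega^\oplus\mathcal{H}_{\bar\omega}\,d\mu\cong V$, where I abbreviate $V:=\int_\Omega^\oplus\xi(\omega)\otimes\mathcal{H}_{\bar\omega}\,d\mu\subseteq L^2G$. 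Because the right regular representation disintegrates as $\int_{\hat G}^\oplus(1\otimes\bar\omega)\,d\mu$, on each fiber $\rho(g)$ acts as $1_\omega\otimes\bar\omega(g)$ and therefore preserves $\xi(\omega)\otimes\mathcal{H}_{\bar\omega}$; hence $V$ is $\rho(G)$-invariant. By the commutation theorem the orthogonal projection $P_V$ onto $V$ then lies in $\rho(G)'=\lambda(G)''=LG$, so $V$ is canonically a right-$LG$-submodule of $L^2G$.

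Next I would reduce the dimension to a trace. For a closed right-$LG$-submodule of $L^2G$ the dimension function recalled in the preliminaries agrees with the von Neumann dimension, namely $\dim_{(LG,\psi)}V=\psi(P_V)$, read as an element of $[0,\infty]$ since $\psi$ is only semifinite. In the Plancherel picture $P_V$ is the decomposable operator $P_V=\int_{\hat G}^\oplus\chi_\Omega(\omega)\,p_{\xi(\omega)}\otimes 1_{\bar\omega}\,d\mu(\omega)$, where $p_{\xi(\omega)}$ is the orthogonal projection of $\mathcal{H}_\omega$ onto $\mathbb{C}\xi(\omega)$; this is exactly the form $\int^\oplus T_\omega\otimes 1\,d\mu$ characterizing elements of $LG=\lambda(G)''$, which re-confirms $P_V\in LG$. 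Finally I would invoke the normalization content of \cite[Theorem 18.8.1]{Dix69}: with the Plancherel measure chosen to match the fixed Haar measure, the canonical trace $\psi$ disintegrates into the fiberwise operator traces, $\psi\bigl(\int_{\hat G}^\oplus T_\omega\otimes 1\,d\mu\bigr)=\int_{\hat G}\operatorname{Tr}_{\mathcal{H}_\omega}(T_\omega)\,d\mu(\omega)$ (equivalently, the Plancherel formula $\lVert f\rVert_2^2=\int_{\hat G}\lVert\omega(f)\rVert_{\mathrm{HS}}^2\,d\mu$). Applying this to $P_V$ and using that a rank-one projection onto a unit vector has trace $\operatorname{Tr}(p_{\xi(\omega)})=\lVert\xi(\omega)\rVert^2=1$ yields $\dim_{(LG,\psi)}V=\int_{\hat G}\chi_\Omega(\omega)\operatorname{Tr}(p_{\xi(\omega)})\,d\mu=\int_{\hat G}\chi_\Omega\,d\mu=\mu(\Omega)$.

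The one substantive step is this last identity: that $\psi$ disintegrates precisely into the fiberwise operator traces, with the Plancherel measure as weight. This is exactly what Dixmier's theorem packages, and it is where the correct normalization between Haar and Plancherel measure enters; everything else is bookkeeping about decomposable operators, measurability of $\omega\mapsto p_{\xi(\omega)}$, and the trace of a rank-one projection. I would only take care to interpret all identities in $[0,\infty]$, since both $\psi(P_V)$ and $\mu(\Omega)$ may be infinite.
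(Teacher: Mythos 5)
Your proof is correct and takes essentially the same approach as the paper: the paper offers no written argument at all, stating only that the lemma ``follows directly from \cite[Theorem 18.8.1]{Dix69}'', and what you have written out is precisely that verification. Identifying the projection onto $\int_{\Omega}^{\oplus}\xi(\omega)\otimes\mathcal{H}_{\bar{\omega}}\,\mathrm{d}\mu(\omega)$ with the decomposable operator $\int^{\oplus}\chi_{\Omega}(\omega)\,p_{\xi(\omega)}\otimes 1\,\mathrm{d}\mu(\omega)\in LG$ and then evaluating the Plancherel trace $\psi$ fiberwise (trace of a rank-one projection being $1$) is exactly the intended reading of Dixmier's theorem, so there is nothing to add.
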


\section*{Proof of Theorem \ref{thm:main_compute}}

We fix $n\geq 0$, a plcus group $G$, and a Haar measure on $G$. As in the statement $\mu$ denotes the corresponding Plancherel measure on $\hat{G}$. We fix also a field of representatives $\omega \mapsto (\pi_{\omega},\mathcal{H}_{\omega})$ etc., as in the statement of \cite[Theorem 18.8.1]{Dix69}.

Included in the statement of the Theorem \ref{thm:main_compute} is of course that the function
\begin{equation}
\omega \mapsto \dim_{\mathbb{C}} \underline{H}^n(G,\mathcal{H}_{\omega}) \nonumber
\end{equation}
is measurable, so that the integral is well-defined. 

\begin{lemma} \label{lma:cohom_cross_section} Fix $n\geq 1$.
There is a family $(c_i')_{i\in \mathbb{N}}$ of weakly measurable maps $c_i' \colon \hat{G}\rightarrow L^2_{loc}(G^n,\mathcal{H}_{\omega})$ such that, for almost every $\omega$:
\begin{itemize}
\item the map $c_i'(\omega)$ is a cocycle representing a class $[c_i'(\omega)] \in \underline{H}^n(G,\mathcal{H}_{\omega})$
\item the set of classes $[c_i'(\omega)]_{1\leq i\leq \dim_{\mathbb{C}}\underline{H}^n(G,\mathcal{H}_{\omega})}$ is linearly independent with dense linear span in $ \underline{H}^n(G,\mathcal{H}_{\omega})$. 
\end{itemize}
\end{lemma}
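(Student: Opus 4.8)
The plan is to build the field $(c_i')$ in three stages: first produce a countable weakly measurable family of cocycle-valued sections whose values span $Z^n(G,\mathcal{H}_\omega)$ densely for a.e.\ $\omega$, then push these classes down to the reduced cohomology, and finally run a fibrewise measurable Gram--Schmidt to extract the linearly independent, densely spanning, dimension-indexed subfamily. For the first stage I would exploit second countability of $G$: fixing an exhaustion $K_1\subseteq K_2\subseteq\cdots$ of $G^n$ by compact sets, the projective topology on $L^2_{loc}(G^n,\mathcal{H}_\omega)$ is induced by countably many seminorms, so the fibres are separable Fr\'echet spaces and $\omega\mapsto L^2_{loc}(G^n,\mathcal{H}_\omega)$ is a measurable field admitting a fundamental sequence of weakly measurable sections. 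The coboundary operators act fibrewise by the same universal formula, hence disintegrate as a decomposable field of continuous operators, so $Z^n(G,\mathcal{H}_\omega)=\ker\partial^n(\omega)$ is a measurable field of closed subspaces and therefore admits a fundamental sequence $(c_i)$ of weakly measurable sections with $\overline{\spann}\{c_i(\omega)\}=Z^n(G,\mathcal{H}_\omega)$ for a.e.\ $\omega$.

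Passing to reduced cohomology, I would observe that $\overline{B^n}(\omega)$ is the closed span of the measurable sections $\partial^{n-1}(e_j(\omega))$, where $(e_j)$ is a fundamental sequence for $L^2_{loc}(G^{n-1},\mathcal{H}_\omega)$; thus $\omega\mapsto\overline{B^n}(\omega)$ is again a measurable field of closed subspaces, the quotient field $\underline{H}^n(G,\mathcal{H}_\omega)=Z^n(\omega)/\overline{B^n}(\omega)$ is a measurable field of Hausdorff (separable Fr\'echet) spaces, and the quotient maps $q_\omega$ form a measurable field of continuous surjections. Writing $[c_i(\omega)]:=q_\omega c_i(\omega)$, continuity of $q_\omega$ together with density of $\spann\{c_i(\omega)\}$ in $Z^n(\omega)$ shows that $\spann\{[c_i(\omega)]\}$ is dense in $\underline{H}^n(G,\mathcal{H}_\omega)$ for a.e.\ $\omega$.

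Finally I would select a linearly independent subfamily greedily and measurably: having recorded the indices chosen so far, declare $c_i(\omega)$ selected precisely when $[c_i(\omega)]$ lies outside the linear span of the classes already chosen. Since finite-dimensional subspaces of a Hausdorff topological vector space are closed, this span coincides with the closed span of finitely many measurable sections, so membership in it is a Borel condition in $\omega$; the procedure therefore partitions $\hat{G}$ into the measurable sets $\{\dim_{\mathbb{C}}\underline{H}^n(G,\mathcal{H}_\omega)=k\}$, $k\in\bar{\mathbb{N}}_0$, and yields weakly measurable selections $c_1',c_2',\dots$. On $\{\dim_{\mathbb{C}}\underline{H}^n=k\}$ exactly $k$ sections are selected; by construction every $[c_i(\omega)]$ lies in their ($k$-dimensional, hence closed) span, which is therefore dense and, being closed, all of $\underline{H}^n(G,\mathcal{H}_\omega)$, so the selected classes form a linearly independent spanning family. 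Setting $c_i'(\omega):=0$ for $i>\dim_{\mathbb{C}}\underline{H}^n(G,\mathcal{H}_\omega)$ completes the construction, and measurability of the partition gives measurability of $\omega\mapsto\dim_{\mathbb{C}}\underline{H}^n(G,\mathcal{H}_\omega)$, as needed for Theorem~\ref{thm:main_compute}.

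The main obstacle is the package of measurability assertions underpinning the argument---chiefly that $\overline{B^n}(\omega)$ varies measurably, so that the reduced-cohomology field, the quotient maps $q_\omega$, and the Borel linear-independence predicate are all well defined. Because the coboundary maps act fibrewise by the universal module formula for $\partial$, their decomposability is automatic; the remaining work is to transfer the standard measurable-selection machinery (the Effros Borel structure on closed subspaces and the Kuratowski--Ryll-Nardzewski theorem) from the Hilbert fibres $L^2(K_m,\mathcal{H}_\omega)$ over the compact pieces $K_m$ to the Fr\'echet fibres $L^2_{loc}(G^n,\mathcal{H}_\omega)$, and to check that closures of spans of measurable sections remain measurable fields of closed subspaces.
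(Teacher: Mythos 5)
Your proposal is correct in outline, but it takes a genuinely different route from the paper at the one step that matters. The paper never establishes what you assert in your first stage, namely a fundamental sequence for the kernel field $\omega\mapsto Z^n(G,\mathcal{H}_\omega)$ with pointwise a.e.\ dense spans. Instead it builds nested product compacta $K^l_{(m)}$ (with $(S^{l+1}_{(m)})^2\subseteq S^l_{(m)}$, precisely so that the coboundaries become uniformly bounded decomposable operators between the Hilbert-space direct integrals $\int^\oplus_{\hat G}L^2(K^n_{(m)},\mathcal{H}_\omega)\,\mathrm{d}\mu$), takes a sequence with dense span in the kernel of the resulting global coboundary on the projective limit, and observes only that its members are a.e.\ pointwise cocycles: density in the global kernel does not obviously descend to dense spans in the fibres $Z^n(G,\mathcal{H}_\omega)$, because membership in the global kernel imposes square-integrability over $\hat G$ at every level $m$ simultaneously. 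To repair density the paper then passes to the continuous homology side (compactly supported chains, an inductive rather than projective limit, where fibrewise density is unproblematic) and invokes the duality between $\underline{H}^n(G,\mathcal{H}_\omega)$ and $H_n(G,\mathcal{H}_\omega)$, which separates points of the reduced cohomology; the paper's remark after the proof stresses that this duality step is exactly where reduced cohomology enters. You avoid homology altogether: you extract pointwise dense measurable selections of $Z^n(G,\mathcal{H}_\omega)$ directly from selection theorems, push them through the continuous open surjections $q_\omega$ onto $\underline{H}^n(G,\mathcal{H}_\omega)$, and your greedy extraction is then the same finishing move the paper dismisses with ``it is obvious how to proceed.'' This works, provided the selection step is implemented the right way: after trivializing the field $\omega\mapsto\mathcal{H}_\omega$ on the sets where $\dim\mathcal{H}_\omega$ is constant, $L^2_{loc}(G^n,\mathcal{H}_\omega)$ becomes a fixed separable Fr\'echet (hence Polish) space, $(\omega,x)\mapsto\partial^n(\omega)x$ is Carath\'eodory and hence jointly measurable, so $\omega\mapsto\ker\partial^n(\omega)$ is a closed-valued, graph-measurable multifunction and Kuratowski--Ryll-Nardzewski/Castaing yields the countable a.e.\ pointwise dense family of measurable selections. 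One warning: your stated plan of ``transferring'' the machinery from the Hilbert fibres $L^2(K_m,\mathcal{H}_\omega)$ is the one version that runs into trouble, since the closures of the restrictions of $Z^n(G,\mathcal{H}_\omega)$ to $K_m$ are images of a projective limit, not kernels of decomposable operators, and their measurability and fullness is precisely the subtlety that pushed the paper into its homology detour; apply the Effros/KRN machinery to the Fr\'echet fibre itself, not level by level. In exchange for this descriptive-set-theoretic input you dispense with both the nested-compacta construction and the homology duality: the paper's route buys staying inside classical direct-integral theory, yours buys a direct fibrewise argument in which density in reduced cohomology comes for free from continuity of the quotient maps.
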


\begin{proof}
Let $S^l_{(m)}\subseteq G$ be compact subsets with non-empty interior such that for all $l\in \mathbb{N}$ the sequence $(S^l_{(m)})_{m\in \mathbb{N}}$ of subsets of $G$ is increasing and $\cup_m S^l_{(m)} = G$, and further such that for all $l,m\in \mathbb{N}$ we have $(S^{l+1}_{(m)})^2 \subseteq S^l_{(m)}$. It is not hard to see that such sets exist. Denote for all $l$ the direct product $K^l_{(m)} := \prod_{i=1}^l S^i_{(m)}$. Then we have for all $m$ a commuting diagram
\begin{displaymath}
\xymatrix{  \varprojlim_{m} \int_{\hat{G}}^{\oplus} L^2(K^{n}_{(m)},\mathcal{H}_{\omega}) \mathrm{d}\mu(\omega) \ar[r]^{\partial^{n}_{(*)}} \ar[d]_{\kappa_m} & \varprojlim_{m} \int_{\hat{G}}^{\oplus} L^2(K^{n+1}_{(m)},\mathcal{H}_{\omega}) \mathrm{d}\mu(\omega) \ar[d]^{\kappa_m} \\ \int_{\hat{G}}^{\oplus} L^2(K^{n}_{(m)},\mathcal{H}_{\omega}) \mathrm{d}\mu(\omega) \ar[r]^{\partial^{n}_{(m)}} & \int_{\hat{G}}^{\oplus} L^2(K^{n+1}_{(m)},\mathcal{H}_{\omega}) \mathrm{d}\mu(\omega) }
\end{displaymath}
where the coboundary maps $\partial^n_{(m)}$ are the usual ones, acting pointwise on the $ L^2(K^{n}_{(m)},\mathcal{H}_{\omega})$. Observe that the pointwise operator norm of the coboundary maps depend only on the measure of the $K^n_{(m)}$ and the degree $n$ whence the direct integrals $\partial^n_{(m)}$ are well-defined for all $m$.

The projective limits are separable, complete metrizable spaces, and it is clear that if we choose $(c_i)_i \subseteq \ker \partial^n_{(*)}$ to have dense linear span, then these correspond to measurable sections $c_i \colon \hat{G} \rightarrow L^2_{loc}(G^n,\mathcal{H}_{\omega})$ consisting of cocycles pointwise almost everywhere. 

To arrange these as claimed in the statement, observe that for every $\omega$ the reduced cohomology $\underline{H}^n(G,\mathcal{H}_{\omega})$ is in natural duality with the continuous homology $H_n(G,\mathcal{H}_{\omega})$, induced by a duality with the space of cycles, and that this separates points on $\underline{H}^n(G,\mathcal{H}_{\omega})$. Hence we may proceed as above to obtain \emph{countably} many measurable cross sections of cycles that a.e.~span the homologies densely. Then it is obvious how to proceed.
\end{proof}

\begin{remark}
The final paragraph of the preceding proof is the only place where we must consider the \emph{reduced} cohomology with coefficients in the $\mathcal{H}_{\omega}$.
\end{remark}

In particular this proves that the function 
\begin{equation}
d:\hat{G}\rightarrow \bar{\mathbb{N}}_0: \omega \mapsto \dim_{\mathbb{C}} \underline{H}^n(G,\mathcal{H}_{\omega}), \quad \omega \in \hat{G} \nonumber
\end{equation}
is measurable. For $k\in \bar{\mathbb{N}}_0$, set $\Omega_k=d^{-1}(k)$.
Thus $\hat{G}$ is the disjoint union of the $\Omega_k$'s, each of these being measurable, and the claim of the theorem is that
\begin{equation}
\beta^n_{(2)}(G) = \sum_{k\in \bar{\mathbb{N}}_0} k\cdot \mu(\Omega_k). \nonumber
\end{equation}

Further, to each set $\Omega_k$ corresponds a central projection $Q_k\in LG\cap RG$ such that $Q_k(L^2G) \cong \int_{\Omega_k}^{\oplus} \mathcal{H}_{\omega}Ê\bar{\otimes} \mathcal{H}_{\bar{\omega}} \mathrm{d}\mu(\omega)$. In particular the $Q_k$ are pairwise orthogonal and sum to the identity. Hence by \cite[Proposition 3.10]{Pe11}, to prove the theorem it is sufficient to show that for all $k\in \bar{\mathbb{N}}_0$ we have
\begin{equation}
\dim_{(LG,\psi)} \underline{H}^n(G,Q_k(L^2G)) = k\cdot \mu(\Omega_k). \nonumber
\end{equation}

Fix such a $k$. We may re-order the restrictions $c_i'\vert_{\Omega_k}$ to obtain (countably many) representatives $\alpha_i\colon \Omega_k \rightarrow L^2_{loc}(G^n,\mathcal{H}_{\omega})$ such that these are (a.e.) cocycles, the induced equivalence classes densely span the cohomology $\underline{H}^n(G,\mathcal{H}_{\omega})$, and are linearly independent (in particular, they are all non-zero a.e.). For any measurable $\Omega \subseteq \Omega_k$ and any weakly measurable section of cocycles $\alpha \colon \Omega \rightarrow  Z^n(G,\mathcal{H}_{\omega})$ which is locally square integrable, in the sense that
\begin{equation}
\forall K\subseteq G^n \: \: \mathrm{ compact } : \int_{\Omega \times K}Ê\lVert\alpha(\omega)(k)\rVert^2_{\mathcal{H}_{\omega}} \mathrm{d}(\mu\times Haar)(\omega,k) < \infty, \nonumber
\end{equation}
we define a right-$LG$-submodule $E_{\alpha}$ of $L^2_{loc}(G^n,Q(L^2G))$, where $Q$ is the central projection in $LG\cap RG$ corresponding to $\Omega$, as the image of the map:
\begin{equation}
\int_{\Omega}^{\oplus}\mathcal{H}_{\bar{\omega}} \mathrm{d}\mu(\omega)\rightarrow L^2_{loc}(G^n,Q(L^2G)):\xi\mapsto [g\mapsto \alpha(g)\otimes\xi]. \nonumber
\end{equation}

It is clear that for every such $\alpha:\Omega\rightarrow Z^n(G,\mathcal{H}_{\omega})$ we have
\begin{enumerate}[(i)]
\item $E_{\alpha} \subseteq Z^n(G,Q(L^2G))$,
\item $\dim_{(LG,\psi)} E_{\alpha} = \dim_{(LG,\psi)} \overline{E_{\alpha}} = \mu(\Omega)$ if $\alpha$ is a.e. non-zero. Indeed the morphism $\int_{\Omega}^{\oplus}\mathcal{H}_{\bar{\omega}}\mathrm{d}\mu(\omega) \rightarrow E_{\alpha}$ is an isomorphism of right-$LG$-modules, and then we can apply \cite[B.31 and B.34]{Pe11} combined with Lemma \ref{lma:formal_dimension}.
\end{enumerate}

Observe that we may consider a decomposition of $\Omega_k$ as a countable disjoint union of measurable subsets $\Omega_k^{(j)}, j\in \mathbb{N}$, each with finite measure $\mu(\Omega_k^{(j)}) < \infty$. Then we can choose, for any $\varepsilon > 0$ measurable sets $\Omega$ such that for all $j$, $\mu(\Omega \cap \Omega_k^{(j)}) > \mu(\Omega_k^{(j)}) - \frac{\varepsilon}{2^j}$ and such that all the $\alpha_i$'s satisfy the integrability condition on $\Omega \cap \left(\cup_{j \in F} \Omega_k^{(j)}\right) \times K$ for every finite set $F$ and every $K\subset G^n$ compact -- observe that this requires explicitly the $\sigma$-compactness of $G$. The point is that, substituting $\Omega \cap \left(\cup_{j \in F} \Omega_k^{(j)}\right)$ for $\Omega$, with $F$ increasing to $\mathbb{N}$ we can arrange that $\Omega$ increases to $\Omega_k$ (in measure) and such that the $\alpha_i$'s all satisfy the integrability condition on $\Omega$.

\begin{lemma} \label{lma:lessthan}
With notation as above, the morphism of right-$LG$-modules $E_{\alpha} \rightarrow \underline{H}^n(G,Q(L^2G))$ is injective for any section $\alpha$ such that $\alpha(\omega) \neq 0 \in \underline{H}^n(G,\mathcal{H}_{\omega})$ for a.e.~$\omega$.
\end{lemma}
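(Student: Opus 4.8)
The plan is to show that the kernel of $E_{\alpha}\to\underline{H}^n(G,Q(L^2G))$ is trivial. Since the map $\int_{\Omega}^{\oplus}\mathcal{H}_{\bar\omega}\,\mathrm{d}\mu(\omega)\to E_{\alpha}$, $\xi\mapsto c_\xi:=[g\mapsto\alpha(g)\otimes\xi]$, is an isomorphism of right-$LG$-modules by (ii), this amounts to showing that no nonzero section $\xi$ yields a cocycle $c_\xi$ lying in the closure $\overline{B^n(G,Q(L^2G))}$ of the coboundaries. So I assume $\xi\neq 0$, put $\Omega'=\{\omega:\xi(\omega)\neq 0\}$ (a set of positive measure), and aim for a contradiction.

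First I would strip off the second tensor factor. Set $\eta(\omega)=\xi(\omega)/\lVert\xi(\omega)\rVert$ on $\Omega'$ and $\eta(\omega)=0$ otherwise; this is a measurable field of vectors of norm $\leq 1$, so the fibrewise assignment $v\otimes w\mapsto v\,\langle w,\eta(\omega)\rangle$ assembles to a norm-$\leq 1$, $G$-equivariant decomposable operator $T\colon Q(L^2G)\to\int_{\Omega}^{\oplus}\mathcal{H}_{\omega}\,\mathrm{d}\mu(\omega)$ (recall $G$ acts by $\omega\otimes 1$, trivially on the second factor). Post-composition by $T$ defines a continuous map $T_*$ on the $L^2_{loc}$-complexes commuting with the coboundary, so it carries $\overline{B^n}$ into $\overline{B^n}$; hence $\gamma:=T_*c_\xi$ lies in $\overline{B^n(G,\int_{\Omega}^{\oplus}\mathcal{H}_{\omega}\,\mathrm{d}\mu)}$, while $\gamma(\omega)=\lVert\xi(\omega)\rVert\,\alpha(\omega)$ represents $\lVert\xi(\omega)\rVert\,[\alpha(\omega)]\neq 0$ in $\underline{H}^n(G,\mathcal{H}_{\omega})$ for every $\omega\in\Omega'$. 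It now remains to contradict the simultaneous facts that $\gamma$ is globally a limit of coboundaries and that $[\gamma(\omega)]\neq 0$ on the positive-measure set $\Omega'$.

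This is the crux: the reduced cohomology of the direct integral must detect the fibres. I would exploit the duality already used by the paper, namely that for a.e.\ $\omega$ the pairing of cocycles with the continuous homology $H_n(G,\mathcal{H}_{\omega})$ descends to $\underline{H}^n(G,\mathcal{H}_{\omega})$ and separates its points. Exactly as in the last paragraph of Lemma \ref{lma:cohom_cross_section}, choose countably many weakly measurable fields of compactly supported cycles $z_j\colon\Omega\to Z_n(G,\mathcal{H}_{\omega})$ whose classes span $H_n(G,\mathcal{H}_{\omega})$ densely a.e. By separation, the set $\{[\gamma(\omega)]\neq 0\}$ agrees up to null sets with $\bigcup_j\{f_j\neq 0\}$, where $f_j(\omega):=\langle z_j(\omega),\gamma(\omega)\rangle$; hence some $\{f_{j_0}\neq 0\}$ meets $\Omega'$ in a set of positive measure. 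Shrinking it, I may assume that on this set $\lVert z_{j_0}(\omega)\rVert$ is bounded and the supports of the $z_{j_0}(\omega)$ lie in a fixed compact $K\subseteq G^n$, and I choose a bounded measurable unimodular phase $\phi$, supported there, with $\phi(\omega)f_{j_0}(\omega)=\lvert f_{j_0}(\omega)\rvert$.

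Finally I would integrate. The assignment
\begin{equation}
c\longmapsto\int_{\Omega}\phi(\omega)\,\langle z_{j_0}(\omega),c(\omega)\rangle\,\mathrm{d}\mu(\omega)\nonumber
\end{equation}
is, by Cauchy--Schwarz in the $\omega$- and $G^n$-variables together with the boundedness and compact-support conditions just arranged, a continuous linear functional on $Z^n(G,\int_{\Omega}^{\oplus}\mathcal{H}_{\omega}\,\mathrm{d}\mu)$ for the $L^2_{loc}$-topology; and because each $z_{j_0}(\omega)$ is a cycle, it annihilates every coboundary, hence all of $\overline{B^n}$. Evaluated on $\gamma$ it equals $\int\lvert f_{j_0}(\omega)\rvert\,\mathrm{d}\mu(\omega)>0$, contradicting $\gamma\in\overline{B^n}$. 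Thus $\xi=0$ and the map is injective. The main obstacle is precisely this last disintegration step: manufacturing from the fibrewise homology pairing a single globally defined functional that is continuous for the $L^2_{loc}$-topology, so that it kills the closure $\overline{B^n}$ rather than merely $B^n$. This is exactly what forces the use of compactly supported cycles together with the local square-integrability of $\alpha$ arranged before the statement of the lemma.
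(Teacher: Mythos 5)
Your proof is correct, but at the crux it takes a genuinely different route from the paper's. Both arguments begin identically: injectivity reduces to showing that no nonzero section $\xi$ can have $\alpha(\cdot)\otimes\xi \in \overline{B^n(G,Q(L^2G))}$, and both then strip off the $\mathcal{H}_{\bar{\omega}}$-factor --- you via the slice map along $\xi(\omega)/\lVert\xi(\omega)\rVert$, the paper by expanding along a measurable field of orthonormal bases $\{e_j^{(\omega)}\}$ of $\int_{\Omega}^{\oplus}\mathcal{H}_{\bar{\omega}}\,\mathrm{d}\mu(\omega)$ and locating a coordinate $j_0$ and a positive-measure set on which $|x_{j_0}^{(\omega)}|$ is bounded below. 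From there the paper argues sequentially: it takes $\eta_m$ with $\partial\eta_m \rightarrow \alpha(\cdot)\otimes\xi$ in $L^2_{loc}$, projects onto the $j_0$-th coordinate, and extracts (implicitly, via a subsequence and a diagonal argument over an exhaustion by compacts) almost-everywhere fibrewise convergence $\partial\eta_{j_0}^{(m,\omega)} \rightarrow \alpha(\omega)x_{j_0}^{(\omega)}$, contradicting $[\alpha(\omega)]\neq 0$ in $\underline{H}^n(G,\mathcal{H}_{\omega})$. You never touch an approximating sequence: instead you convert the fibrewise non-vanishing into a single global obstruction, an $L^2_{loc}$-continuous functional built from a measurable field of compactly supported cycles (the same homology duality the paper invokes at the end of the proof of Lemma~\ref{lma:cohom_cross_section}), which kills $B^n$ and hence $\overline{B^n}$, yet is strictly positive on $T_*(\alpha(\cdot)\otimes\xi)$. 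As to what each buys: the paper's route is shorter and more elementary, but it leans on metrizability of $L^2_{loc}$ (so that closure is sequential closure --- harmless since $G$ is second countable) and on the unstated subsequence extraction; your route is sequence-free, hence independent of metrizability, and it makes explicit exactly where reduced cohomology and the cycle duality enter, at the cost of the bookkeeping you correctly flag (shrinking to a finite-measure set on which the cycles $z_{j_0}(\omega)$ have uniformly bounded norms and supports in a fixed compact, so that Cauchy--Schwarz yields continuity of the functional). In effect your argument produces an explicit dual witness for the same fibrewise-versus-global tension that the paper resolves by passing to a.e.\ limits; both are valid proofs of the lemma.
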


\begin{proof}
Suppose by contradiction that the section $\alpha(\cdot)\otimes \xi$ does represent the zero class for some section $\xi\in \int_{\Omega}^{\oplus} \mathcal{H}_{\bar{\omega}} \mathrm{d}\mu(\omega)$. That is, there is a sequence $\eta_m\in L^2_{loc}(G^{n-1},Q(L^2G))$ such that $\partial \eta_m \rightarrow \alpha(\cdot) \otimes \xi$ in $L^2_{loc}(G^n,Q(L^2G))$. Now for all $m$ the $\eta_m$ correspond to sections, which abusively as usual we also denote $\eta_m$ on $\Omega$ and we may write these
\begin{equation}
\eta_m\colon \omega \mapsto \sum_j \eta_j^{(m,\omega)}(\cdot) \otimes e_j^{(\omega)} \in L^2_{loc}(G^{n-1},\mathcal{H}_{\omega}\bar{\otimes}\mathcal{H}_{\bar{\omega}}), \nonumber
\end{equation}
where $\{ e_j\}$ is an orthonormal basis of the direct integral $\int_{\Omega}^{\oplus} \mathcal{H}_{\bar{\omega}} \mathrm{d}\mu(\omega)$. Then the coboundary map just acts as
\begin{equation}
\partial\eta_m\colon \omega \mapsto \sum_j (\partial\eta_j^{(m,\omega)})(\cdot) \otimes e_j^{(\omega)} \in L^2_{loc}(G^{n},\mathcal{H}_{\omega}\bar{\otimes}\mathcal{H}_{\omega}). \nonumber
\end{equation}
Decomposing in the same way $\xi$ as a sum $\xi(\omega) = \sum_j x_j^{(\omega)}e_j^{(\omega)}$ we find a measurable set $\Omega'\subseteq \Omega$ and some $j_0$ such that $\inf_{\omega\in \Omega'} |x_{j_0}^{(\omega)}| > 0$ and $\mu(\Omega')>0$. But clearly $\partial\eta_{j_0}^{(m,\omega)} \rightarrow \alpha(\omega)x_{j_0}^{(\omega)}$ as $m\rightarrow \infty$, for almost every $\omega$, contradicting the hypothesis.
\end{proof}

We conclude, by applying the previous lemma to linear combinations, that for the $\alpha_i$'s fixed above, the (algebraic) direct sum $\oplus_{i=1}^k E_{\alpha_i}$ embeds in $\underline{H}^n(G,Q(L^2G))$ whence
\begin{equation} \label{eq:lessthan}
k\cdot \mu(\Omega) = \sum_{i=1}^k \dim_{(LG,\psi)} E_{\alpha_i} =\dim_{(LG,\psi)}\oplus_{i=1}^k E_{\alpha_i}\leq \dim_{(LG,\psi)} \underline{H}^n(G,Q(L^2G)).
\end{equation}

Since $\Omega\subset\Omega_k$ could be chosen increasing to $\Omega_k$ (in measure), the same holds with $\Omega_k$ in place of $\Omega$. This proves that $LHS\geq RHS$ in Theorem \ref{thm:DS_modification}.

We turn now to the opposite inequality. All notations and every choice made previously remain in force, in particular we keep $k,\Omega,\alpha_i$ as before. Clearly it is sufficient to show that, for $\Omega$ increasing to $\Omega_k$, the opposite inequality holds in \eqref{eq:lessthan}. If $k=\infty$ the statement is clear, so we may suppose that $k\in \mathbb{N}_0$.

We claim that the linear span of the (images of the) $E_{\alpha_i}$ is dense in $\underline{H}^n(G,Q(L^2G))$. Indeed, we may append countably many sections $\beta_m\colon \Omega \rightarrow B^n(G,\mathcal{H}_{\omega})$ such that the $\alpha_i$'s and the $\beta_j$'s together span, for a.e. $\omega$, a dense subspace of $Z^n(G,\mathcal{H}_{\omega})$. Taking any cocycle $c\in Z^n(G,Q(L^2G))$ this corresponds as before to a section
\begin{equation}
c\colon \omega \mapsto \sum_j c_j^{(\omega)} \otimes e_j^{(\omega)} \nonumber
\end{equation}
and the $c_j^{(\omega)}$ are then (a.e.) cocycles in $Z^n(G,\mathcal{H}_{\omega})$. Hence on large subsets of $\Omega$ we may approximate $c(\omega)$ as well as we like by linear combinations of the $\alpha_i$'s and $\beta_j$'s, and the claim follows from this. 

From the claim, we conclude that $\underline{H}^n(G,Q(L^2G))$ contains a dense submodule of dimension $k\cdot \mu(\Omega)$. Then it is easy to see that also $\underline{H}^n(G,Q(L^2G))$ has dimension $k\cdot \mu(\Omega)$. Indeed, write $G^n=\cup_mK_m$ as an increasing union of countably many compact subsets and denote by $Z_{(m)}$ (resp. $B_{(m)}$) the closed image of $Z^n(G,Q(L^2G))$ (resp. $B^n(G,Q(L^2G))$) under restriction to $K_m$ for all $m$. Then $\underline{H}^n(G,Q(L^2G))$ embeds in the projective limit $\varprojlim (Z_{(m)}\ominus B_{(m)})$, which has $LG$-dimension bounded above by $k\cdot \mu(\Omega)$, as follows from \cite[B.31 and B.34]{Pe11}. \hfill \qedsymbol

\begin{remark}
The proof just given can be reformulated more conceptually, as suggested to us by Stefaan Vaes, as follows. Fix for each $\omega \in \hat{G}$ a generating unit vector $\xi_{\omega}$ of $\mathcal{H}_{\omega}$, let $p_{\omega} \in \mathcal{B}(\mathcal{H}_{\omega}\bar{\otimes} \mathcal{H}_{\bar{\omega}})$ be the projection onto $\mathcal{H}_{\omega}\otimes \mathbb{C}\xi_{\omega}$, and let $p:=\int_{\hat{G}}^{\oplus} p_{\omega} \in LG$. Then the corner $pLGp$ is isomorphic to  $L^{\infty}(\hat{G},\mu)$ and we get

\begin{proposition}
Let $G$ be a plcus group with a fixed Haar measure and corresponding Plancherel measure $\mu$ on the unitary dual $\hat{G}$ of $G$. Let $\hat{G}\owns \omega \mapsto \mathcal{H}_{\omega}$ be a measurable field of representatives of equivalence classes of irreducible unitary representations. Then for all $n\geq 0$
\begin{equation}
\dim_{L^{\infty}(\hat{G},\mu)} \underline{H}^n\left(G,\int_{\hat{G}}^{\oplus}\mathcal{H}_{\omega}\mathrm{d}\mu(\omega)\right)  = \int_{\hat{G}} \dim_{\mathbb{C}} \underline{H}^n(G,\mathcal{H}_{\omega}) \mathrm{d}\mu(\omega). \nonumber
\end{equation}
\phantom{a} \hfill \qedsymbol
\end{proposition}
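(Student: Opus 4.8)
The plan is to read the statement as a compression, over the abelian corner $pLGp\cong L^{\infty}(\hat{G},\mu)$, of the full reduced $L^2$-cohomology already computed in Theorem \ref{thm:main_compute}. First I would record the two structural facts that make $p$ usable. The projection $p$ of the preceding Remark acts on $L^2G$ through the second (multiplicity) tensor factor $\mathcal{H}_{\bar{\omega}}$, so its action lies in $RG=(LG)'$ and therefore commutes with the left regular representation; consequently
\begin{equation}
pL^2G=\int_{\hat{G}}^{\oplus}\mathcal{H}_{\omega}\otimes\mathbb{C}\xi_{\omega}\,\mathrm{d}\mu(\omega)\cong\int_{\hat{G}}^{\oplus}\mathcal{H}_{\omega}\,\mathrm{d}\mu(\omega)\nonumber
\end{equation}
is a $G$-subrepresentation, carrying the $G$-action $\int_{\hat{G}}^{\oplus}\pi_{\omega}\,\mathrm{d}\mu(\omega)$. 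Moreover $p$ is an abelian projection of $LG$ (rank one in each fibre $\mathcal{H}_{\bar{\omega}}$) with central support $1$, and, as noted in the Remark, $pLGp\cong L^{\infty}(\hat{G},\mu)$ acts on $pL^2G$ as the fibrewise scalars $f(\omega)$, which is precisely the canonical $L^{\infty}(\hat{G},\mu)$-module structure of the direct integral.

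The next step, which I expect to be the main obstacle, is to identify the cohomology of this summand with the compression of the full cohomology, namely $\underline{H}^n(G,pL^2G)=p\cdot\underline{H}^n(G,L^2G)$ as modules over $pLGp=L^{\infty}(\hat{G},\mu)$. The difficulty is that the $G$-equivariant orthogonal decomposition $L^2G=pL^2G\oplus(1-p)L^2G$ must be carried through the possibly non-Hausdorff cohomology functor \emph{and} through its maximal Hausdorff quotient, compatibly with the right $LG$-action, so that $p$ becomes exactly the projection of $\underline{H}^n(G,L^2G)$ onto $\underline{H}^n(G,pL^2G)$. Concretely this is the direct-integral disintegration of reduced cohomology already extracted in the proof of Theorem \ref{thm:main_compute}: the measurable fields of cocycles of Lemma \ref{lma:cohom_cross_section}, together with the injectivity provided by Lemma \ref{lma:lessthan}, exhibit $\underline{H}^n(G,pL^2G)$ as the measurable field $\omega\mapsto\underline{H}^n(G,\mathcal{H}_{\omega})$, and the same upper and lower estimates that were used there to pin $\dim_{(LG,\psi)}$ now bound $\dim_{L^{\infty}(\hat{G},\mu)}$ from both sides after replacing each amplified module $\int_{\Omega}^{\oplus}\mathcal{H}_{\bar{\omega}}\,\mathrm{d}\mu$ by its single-copy compression $\int_{\Omega}^{\oplus}\mathbb{C}\,\mathrm{d}\mu$.

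Granting this identification, the final step is a compression formula for the semifinite dimension: for an abelian projection $p\in LG$ of central support $1$ one has $\dim_{pLGp}(pM)=\dim_{(LG,\psi)}(M)$ for every right-$LG$-module $M$. I would prove this directly from the normalisation fixed in Lemma \ref{lma:formal_dimension} — a single multiplicity copy $\int_{\Omega}^{\oplus}\mathcal{H}_{\bar{\omega}}\,\mathrm{d}\mu$ has $(LG,\psi)$-dimension $\mu(\Omega)$, while its compression $\int_{\Omega}^{\oplus}\mathbb{C}\,\mathrm{d}\mu$ has $L^{\infty}(\hat{G},\mu)$-dimension $\mu(\Omega)$ as well — combined with the cofinality and additivity properties of the dimension function \cite[Appendix B]{Pe11}. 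Here one cannot simply rescale by $\psi(p)^{-1}$, since $\psi(p)=\mu(\hat{G})$ may be infinite, so the argument must pass through the fibrewise direct-integral picture; this is the one technical point beyond the main obstacle above. Applying the formula with $M=\underline{H}^n(G,L^2G)$ and invoking Theorem \ref{thm:main_compute} (equivalently $\beta^n_{(2)}(G)=\dim_{(LG,\psi)}\underline{H}^n(G,L^2G)$) then gives
\begin{equation}
\dim_{L^{\infty}(\hat{G},\mu)}\underline{H}^n\Big(G,\int_{\hat{G}}^{\oplus}\mathcal{H}_{\omega}\,\mathrm{d}\mu(\omega)\Big)=\dim_{(LG,\psi)}\underline{H}^n(G,L^2G)=\int_{\hat{G}}\dim_{\mathbb{C}}\underline{H}^n(G,\mathcal{H}_{\omega})\,\mathrm{d}\mu(\omega),\nonumber
\end{equation}
which is the assertion.
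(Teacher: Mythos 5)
Your argument is correct, but it runs in the opposite logical direction from the paper's. In the paper, this Proposition \emph{is} the proof of Theorem \ref{thm:main_compute}, rephrased over the abelian corner: one repeats the measurable-sections argument verbatim (Lemma \ref{lma:cohom_cross_section}, Lemma \ref{lma:lessthan}, the partition of $\hat{G}$ into the level sets $\Omega_k$ of $\omega\mapsto\dim_{\mathbb{C}}\underline{H}^n(G,\mathcal{H}_\omega)$, and the two inequalities) with the multiplicity factor $\mathcal{H}_{\bar{\omega}}$ stripped down to $\mathbb{C}$, so the Proposition is proved directly, and Theorem \ref{thm:main_compute} is then \emph{deduced} from it via the compression identity of \cite[Lemma A.16]{KPV13}. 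You instead take Theorem \ref{thm:main_compute} (which the paper proves independently, so there is no circularity) as input and run that same compression identity backwards: $\dim_{L^{\infty}(\hat{G},\mu)}\underline{H}^n(G,L^2G.p)=\dim_{L^{\infty}(\hat{G},\mu)}\bigl(\underline{H}^n(G,L^2G).p\bigr)=\dim_{(LG,\psi)}\underline{H}^n(G,L^2G)=\beta^n_{(2)}(G)$, then invoke the Theorem. This is shorter, but it forfeits the point of the Remark: the Proposition was meant as the conceptually primary statement from which Theorem \ref{thm:main_compute} follows formally, and your derivation cannot serve that purpose. Two smaller observations. First, what you flag as the main obstacle is actually elementary: since $p$ commutes with the $G$-action, the splitting $L^2G=L^2G.p\oplus L^2G.(1-p)$ splits the whole cochain complex $L^2_{loc}(G^{\bullet},L^2G)$ topologically, hence both cohomology and its maximal Hausdorff quotient split, giving $\underline{H}^n(G,L^2G.p)\cong\underline{H}^n(G,L^2G).p$ as $pLGp$-modules with no appeal to the measurable-sections machinery of Lemma \ref{lma:cohom_cross_section} or Lemma \ref{lma:lessthan}; invoking those lemmas there is harmless but unnecessary, and slightly misplaces where the content lies. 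Second, the real weight in your route is carried by the purely module-theoretic compression formula $\dim_{pLGp}(E.p)=\dim_{(LG,\psi)}(E)$ for arbitrary right-$LG$-modules $E$, with $pLGp$ carrying the restricted trace (which, as your Lemma \ref{lma:formal_dimension} consistency check shows, corresponds to $f\mapsto\int_{\hat{G}}f\,\mathrm{d}\mu$ under $pLGp\cong L^{\infty}(\hat{G},\mu)$); you are right that one cannot rescale by $\psi(p)^{-1}$ since $\psi(p)=\mu(\hat{G})$ may be infinite, but this formula is exactly \cite[Lemma A.16]{KPV13} and could simply be cited rather than reproved.
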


Then, since $\int_{\hat{G}}^{\oplus}\mathcal{H}_{\omega}\mathrm{d}\mu(\omega) \simeq L^2G.p$, it is easy to derive Theorem \ref{thm:main_compute} from this, see \cite[Lemma A.16]{KPV13}.
\end{remark}

\section*{Proof of Theorem \ref{thm:DS_modification}}

Theorem \ref{thm:DS_modification} follows immediately from Theorem \ref{thm:main_compute} and the following result.

\begin{proposition}
Let $G$ be a plcus group and let $\omega$ be a square-integrable representation. Then
\begin{equation}
\dim_{\mathbb{C}}H^n(G,\mathcal{H}_\omega) = \dim_{\mathbb{C}}\underline{H}^n(G,\mathcal{H}_\omega). \nonumber
\end{equation}
\end{proposition}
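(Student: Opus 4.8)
The plan is to show that for square-integrable $\omega$ the continuous cohomology $H^n(G,\mathcal H_\omega)$ is already Hausdorff, i.e. that the canonical surjection $H^n(G,\mathcal H_\omega)\to\underline H^n(G,\mathcal H_\omega)$ is an isomorphism. Since $\underline H^n$ is by construction a quotient of $H^n$, one always has $\dim_{\mathbb C}\underline H^n\le\dim_{\mathbb C} H^n$, so the only content is the reverse inequality, which is equivalent to the assertion that the space of coboundaries $B^n(G,\mathcal H_\omega)$ is closed in the cocycles $Z^n(G,\mathcal H_\omega)$. When $\dim_{\mathbb C}\underline H^n=\infty$ the equality is immediate, so I may assume $\dim_{\mathbb C}\underline H^n<\infty$ and must rule out any collapse $\overline{B^n}\supsetneq B^n$.

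First I would record the consequences of square-integrability that drive the argument. Because $\omega$ is square-integrable it is an atom of the Plancherel measure, so $\mathcal H_\omega$ embeds $G$-equivariantly and isometrically into $L^2G$ (for the left regular representation) via a matrix coefficient $v\mapsto\sqrt{d_\omega}\,\langle\omega(\cdot)v,\eta\rangle$, where $\eta$ is a fixed unit vector and $d_\omega=\mu(\{\omega\})$ is the formal dimension; moreover the orthogonality relations for square-integrable representations provide an $L^2$-bound on these matrix coefficients. In particular $\omega$ is tempered and, as noted in the footnote, $\{\omega\}$ is a closed point of $\hat G$; for $n=1$ this already yields Hausdorffness by Guichardet's criterion, and this is the model I would generalize. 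I should note that the purely von Neumann dimensional route --- transferring reduced $=$ unreduced from the corner $P_\omega(L^2G)\cong\mathcal H_\omega\bar\otimes\mathcal H_{\bar\omega}$ via \cite{KPV13} --- does \emph{not} suffice, precisely because $H^n(G,\mathcal H_\omega\bar\otimes\mathcal H_{\bar\omega})$ need not agree with $H^n(G,\mathcal H_\omega)\bar\otimes\mathcal H_{\bar\omega}$ for the infinite-dimensional multiplicity space, so one must argue for $\mathcal H_\omega$ itself.

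The heart of the matter is to promote the $n=1$ statement to all degrees by producing a bounded contracting homotopy on the cochain complex. Using the square-integrable matrix coefficient $\phi(g)=\langle\omega(g)\eta,\eta\rangle$, which lies in $L^2(G)$ by hypothesis, I would build an operator $h\colon L^2_{loc}(G^n,\mathcal H_\omega)\to L^2_{loc}(G^{n-1},\mathcal H_\omega)$ by convolving against $\phi$ in one variable; the orthogonality relations make $h$ bounded with respect to each of the semi-norms defining the projective ($L^2_{loc}$) topology, uniformly along an exhaustion $G^n=\bigcup_m K_m$ by compacta. A homotopy identity of the form $\partial h+h\partial=\mathrm{id}-\Pi$, with $\Pi$ a bounded projection onto a harmonic complement representing $\underline H^n(G,\mathcal H_\omega)$, then forces $\partial$ to have closed range in degree $n$: a convergent sequence of coboundaries $\partial\eta_m\to c$ can be rewritten, via $h$, as $\partial(h\,\partial\eta_m)=\partial\eta_m-\Pi(\partial\eta_m)$, and boundedness of $h$ and $\Pi$ lets one pass to the limit and exhibit $c$ itself as a genuine coboundary. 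Hence $B^n$ is closed and $H^n(G,\mathcal H_\omega)=\underline H^n(G,\mathcal H_\omega)$.

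I expect the main obstacle to be exactly the construction and boundedness of $h$ on the non-normable space $L^2_{loc}$, carried out uniformly over the exhausting compacta $K_m$ so that the homotopy descends to the projective limit (a Mittag--Leffler type control). This is the one place where square-integrability is used in an essential, quantitative way --- through the membership $\phi\in L^2(G)$ and the orthogonality relations --- rather than merely through the topological isolation of $\{\omega\}$; verifying that the resulting estimates are degree-uniform, and identifying the harmonic complement $\Pi$ with the space representing $\underline H^n$, is the delicate part of the argument.
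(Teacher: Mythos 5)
Your overall strategy --- dispose of the case $\dim_{\mathbb C}\underline{H}^n(G,\mathcal H_\omega)=\infty$ trivially and then prove that $B^n(G,\mathcal H_\omega)$ is closed when $\dim_{\mathbb C}\underline{H}^n(G,\mathcal H_\omega)<\infty$ --- is logically sound, but the proof has a genuine gap at its center: the homotopy $h$ and the projection $\Pi$ satisfying $\partial h+h\partial=\mathrm{id}-\Pi$ are never constructed, and their existence is the entire content of the argument. Worse, the recipe you sketch cannot produce them as stated. A cochain $f\in L^2_{loc}(G^n,\mathcal H_\omega)$ is only \emph{locally} square-integrable, while $\phi=\langle\omega(\cdot)\eta,\eta\rangle$ lies in $L^2(G)$; the integral over all of the noncompact group $G$ defining ``convolution against $\phi$ in one variable'' therefore need not converge: Cauchy--Schwarz would require global square-integrability of $f$ in the convolved variable, and cocycles genuinely fail this (already a $1$-cocycle can be unbounded, growing with word length). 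The compact-group averaging argument you are imitating uses finiteness of Haar measure in an essential way, and square-integrability of matrix coefficients is not a substitute for it. Finally, even granting a well-defined continuous $h$, no reason is given why the homotopy identity should hold; a continuous pair $(h,\Pi)$ with $\ker\Pi\supseteq B^n$ and $\partial h+h\partial=\mathrm{id}-\Pi$ amounts precisely to a topological splitting off of $\overline{B^n}$, i.e. it is at least as strong as the closedness you are trying to prove. So the proposal replaces the proposition by an unproved statement that is no easier, and whose most natural construction breaks down at the first step.

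Moreover, the route you explicitly discard --- the von Neumann dimension argument through the summand $Q(L^2G)\cong\mathcal H_\omega\bar\otimes\mathcal H_{\bar\omega}$ --- is exactly how the paper proves the proposition, and your objection to it does not apply. One does not need $H^n(G,\mathcal H_\omega\bar\otimes\mathcal H_{\bar\omega})$ to coincide with $H^n(G,\mathcal H_\omega)\bar\otimes\mathcal H_{\bar\omega}$; one only needs an \emph{embedding} of the algebraic tensor product $H^n(G,\mathcal H_\omega)\otimes\mathcal H_{\bar\omega}$ into $H^n(G,\mathcal H_\omega\bar\otimes\mathcal H_{\bar\omega})$, and this is elementary: since $G$ acts on $\mathcal H_\omega\bar\otimes\mathcal H_{\bar\omega}$ by $\omega\otimes 1$, a coboundary equation $\partial\eta=\alpha(\cdot)\otimes\xi$ decomposes along an orthonormal basis of the multiplicity space $\mathcal H_{\bar\omega}$, and the component along $\xi$ exhibits $\alpha$ itself as a coboundary. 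Granting this, if $\dim_{\mathbb C}H^n(G,\mathcal H_\omega)=\infty$ one obtains an algebraic direct sum $\bigoplus_i E_{\alpha_i}$ inside $H^n(G,Q(L^2G))$ with each summand of $LG$-dimension $\mu(\omega)>0$ (Lemma \ref{lma:formal_dimension}), hence $\dim_{LG}H^n(G,Q(L^2G))=\infty$; Theorem A of \cite{KPV13} transfers this to the reduced cohomology, and the dimension computation in the proof of Theorem \ref{thm:main_compute}, applied to the atom $\{\omega\}$, converts $\dim_{LG}\underline{H}^n(G,Q(L^2G))=\infty$ into $\dim_{\mathbb C}\underline{H}^n(G,\mathcal H_\omega)=\infty$. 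Together with Guichardet's criterion \cite[Proposition III.2.4]{Gu80} in the finite-dimensional case, this proves the proposition; note that in the infinite case it yields only equality of (infinite) dimensions, with no claim that $B^n$ is closed --- which is all the proposition asks.
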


In light of \cite[Proposition III.2.4]{Gu80} we just need to handle the case where $\dim_{\mathbb{C}}H^n(G,\mathcal{H}_\omega)$ is infinite\footnote{Observe that we know of no example where this happens!}. We retain the notation following Lemma \ref{lma:lessthan} and claim that, just as in that lemma we have an embedding of right-$LG$-modules (with $\otimes$ denoting algebraic tensor product):
\begin{equation}
H^n(G,\mathcal{H}_{\omega})\otimes \mathcal{H}_{\bar{\omega}} \rightarrow H^n(G,\mathcal{H}_{\omega} \bar{\otimes} \mathcal{H}_{\bar{\omega}}). \nonumber
\end{equation}

Indeed, given any $\alpha \in Z^n(G,\mathcal{H}_{\omega})$ and any $\xi\in \mathcal{H}_{\bar{\omega}}$ we claim that $\alpha(\cdot)\otimes \xi = 0 \in H^n(G,\mathcal{H}_{\omega}\bar{\otimes} \mathcal{H}_{\bar{\omega}})$ if and only if $\alpha$ is a coboundary, and this is clear by exactly the same proof as in the lemma: the 'if' part is trivial, and to see the 'only if' part, suppose that there is an $\eta = \sum_i \xi_i\otimes \xi$ such that $\partial\eta = \alpha(\cdot) \otimes \xi$ (note that it is clear a priori that $\eta\in \mathcal{H}_{\omega}\otimes \xi$). Then it is clear that $\alpha(\cdot) = \partial( \sum_i \xi_i )$, which proves the claim.

As before we then choose a sequence $(\alpha_i)_{i\in I}$ of representatives of cocycles, which are linearly independent in $H^n(G,\mathcal{H}_\omega)$. Applying the claim to finite linear combinations of the $\alpha_i$'s we then have an embedding
\begin{equation}
H^n(G,\mathcal{H}_{\omega})\otimes \mathcal{H}_{\bar{\omega}} \simeq \bigoplus_{i\in I} E_{\alpha_i} \hookrightarrow H^n(G,\mathcal{H}_{\omega} \bar{\otimes} \mathcal{H}_{\omega})
\end{equation}
of right-$LG$-modules.

Since $I$ is infinite it follows that
\begin{equation}
\dim_{LG} H^n(G,\mathcal{H}_{\omega} \bar{\otimes} \mathcal{H}_{\omega}) = \infty. \nonumber
\end{equation}

To conclude the proof note that by \cite[Theorem A]{KPV13} it easily follows that the same holds for $\dim_{LG} \underline{H}^n(G,\mathcal{H}_{\omega} \bar{\otimes} \mathcal{H}_{\omega})$ whence
\begin{equation}
\dim_{\mathbb{C}}\underline{H}^n(G,\mathcal{H}_\omega) = \infty \nonumber
\end{equation}
by the proof of Theorem \ref{thm:main_compute}. This finishes the proof of the proposition and thus of the theorem. \hfill \qedsymbol

\section*{Examples}

\subsection*{Semisimple Lie groups}

Let $G$ be a connected, semisimple Lie group with finite center. Recall that, for $\pi\in \hat{G}$, the {\it infinitesimal character} of $G$ is the homomorphism $\alpha_\pi:Z({\mathcal U}(G))\rightarrow \C$ on the center $Z({\mathcal U}(G))$ of the universal enveloping algebra ${\mathcal U}(G)$ of the Lie algebra of $G$, obtained by differentiating $\pi$, extending $d\pi$ to a homomorphism on ${\mathcal U}(G)$ by the universal property, and restricting to the center (Schur's lemma asserting that, by irreducibility, the action of $Z({\mathcal U}(G))$ is scalar). Let also $K$ be a maximal compact subgroup of $G$, we denote by $X=G/K$ the associated Riemannian symmetric space.

\begin{cor}\label{ssgroups} $\beta^n_{(2)}(G)=\left\{
\begin{array}{ccccc }
   0   &  \mbox{if either} & rk(G)\neq rk(K) & \mbox{or} & n\neq\frac{1}{2}\dim(X)  \\
   \sum_{\omega\in\hat{G}_d,\alpha_\omega\equiv 0} \mu(\omega)  &   \mbox{if} &r k(G)=rk(K) & \mbox{and} & n=\frac{1}{2}\dim(X)
\end{array}\right.$
\end{cor}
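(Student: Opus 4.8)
The plan is to combine Theorem~\ref{thm:DS_modification} with the representation-theoretic input of Harish-Chandra and Borel-Wallach. For any irreducible unitary (hence admissible) representation $\omega$ of $G$ I would first pass from continuous cohomology to relative Lie algebra cohomology via the standard comparison isomorphisms
\[
H^n(G,\mathcal{H}_\omega)\;\cong\;H^n(\mathfrak{g},K;\mathcal{H}_\omega^\infty)\;\cong\;H^n(\mathfrak{g},K;\mathcal{H}_{\omega,K}),
\]
where $\mathcal{H}_\omega^\infty$ is the space of smooth vectors and $\mathcal{H}_{\omega,K}$ the underlying Harish-Chandra module (van Est's theorem together with admissibility, see \cite{BW80}). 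The decisive vanishing statement is Wigner's lemma, a Casimir computation: $H^n(\mathfrak{g},K;\mathcal{H}_{\omega,K})=0$ for all $n$ as soon as the infinitesimal character $\alpha_\omega$ differs from that of the trivial representation. Since the trivial representation annihilates $\mathfrak{g}$, its infinitesimal character vanishes on the augmentation ideal of $Z(\mathcal{U}(G))$, so the corollary's condition $\alpha_\omega\equiv 0$ is exactly the condition ``$\alpha_\omega$ equals the infinitesimal character of the trivial representation''. Thus only such $\omega$ can contribute to either term of Theorem~\ref{thm:DS_modification}.

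The second step is to show that the integral term over $\hat{G}\setminus\hat{G}_d$ vanishes. By Harish-Chandra's Plancherel theorem, $\mu$ is supported on the tempered dual, whose non-atomic part consists of families $\mathrm{Ind}_P^G(\sigma\otimes\nu)$ attached to proper cuspidal parabolics $P=MAN$, with $\sigma$ a discrete series of $M$ and $\nu\in\mathfrak{a}^*$; the restriction of $\mu$ to each such family is absolutely continuous in the real parameter $\nu$. The infinitesimal character of $\mathrm{Ind}_P^G(\sigma\otimes\nu)$ depends on $\nu$ through an affine map into $\mathfrak{h}^*/W$ that is injective up to the finite Weyl group, so for fixed $(P,\sigma)$ the locus where it meets the trivial infinitesimal character is finite, hence $\mu$-null. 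Together with the vanishing from the first step this gives $\dim_{\mathbb{C}}\underline{H}^n(G,\mathcal{H}_\omega)=0$ for $\mu$-almost every $\omega\in\hat{G}\setminus\hat{G}_d$ (indeed the unreduced cohomology already vanishes there, so no subtlety about reduction arises), and the integral drops out.

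It then remains to evaluate the atomic sum. If $\mathrm{rk}(G)\neq\mathrm{rk}(K)$ then $G$ has no discrete series, so $\hat{G}_d=\emptyset$ and every $\beta^n_{(2)}(G)=0$, as claimed. If $\mathrm{rk}(G)=\mathrm{rk}(K)$ then $\tfrac{1}{2}\dim(X)=\tfrac{1}{2}\dim\mathfrak{p}$ is an integer, and I would invoke the Borel-Wallach computation of the $(\mathfrak{g},K)$-cohomology of a discrete series representation $\omega$: it equals $\mathbb{C}$ when $\alpha_\omega\equiv 0$ and $n=\tfrac{1}{2}\dim(X)$, and $0$ in all other cases. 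Substituting $\dim_{\mathbb{C}}H^n(G,\mathcal{H}_\omega)$ into the atomic sum of Theorem~\ref{thm:DS_modification} then collapses it to $\sum_{\omega\in\hat{G}_d,\,\alpha_\omega\equiv 0}\mu(\omega)$ in the single degree $n=\tfrac{1}{2}\dim(X)$ and to $0$ otherwise, which is the stated formula.

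The step I expect to be the main obstacle is the measure-zero argument: one must pin down precisely the support and the absolute-continuity of the Plancherel measure on the non-discrete tempered dual and check that the infinitesimal character genuinely varies non-degenerately with the continuous parameter $\nu$, so that the exceptional locus is truly $\mu$-null. A secondary technical point is to make the comparison isomorphism with $(\mathfrak{g},K)$-cohomology fully rigorous, namely the passage from the Hilbert space representation to its smooth and then $K$-finite vectors, which rests on admissibility of $\omega$; for the discrete series term this is exactly the unreduced cohomology entering Theorem~\ref{thm:DS_modification}, while for the continuous part the unreduced cohomology vanishes, so reduced and unreduced agree there.
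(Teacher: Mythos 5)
Your proposal is correct, but it takes a genuinely different route from the paper. The paper does not actually run this corollary through Theorem \ref{thm:DS_modification} at all: it picks a torsion-free cocompact lattice $\Gamma\le G$ (Borel \cite{Bor63}), uses the proportionality $\beta^n_{(2)}(\Gamma)=\operatorname{covol}(\Gamma)\,\beta^n_{(2)}(G)$ together with quasi-isometry invariance of reduced $L^2$-cohomology \cite{Dod77} to identify $\beta^n_{(2)}(\Gamma)$ with $\dim_\Gamma\underline{H}^n_{(2)}(X)$, then quotes Borel's computation \cite{Bor85} of $\underline{H}^n_{(2)}(X)$ as a $G$-module (zero unless $\mathrm{rk}(G)=\mathrm{rk}(K)$ and $n=\frac{1}{2}\dim X$, and otherwise the multiplicity-one direct sum of the discrete series with trivial infinitesimal character), and finally converts $\dim_\Gamma\omega$ into $\operatorname{covol}(\Gamma)\mu(\omega)$ via the Atiyah--Schmid formula \cite{AS77}. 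Your argument instead takes Theorem \ref{thm:DS_modification} as the starting point and supplies the representation-theoretic input directly: the comparison with $(\mathfrak{g},K)$-cohomology for admissible representations, Wigner's lemma to kill every $\omega$ whose infinitesimal character differs from the trivial one, absolute continuity of the Plancherel measure along the cuspidal-parabolic families (so the trivial-infinitesimal-character locus in the non-atomic part is $\mu$-null), Harish-Chandra's equal-rank criterion for existence of discrete series, and the Schmid/Borel--Wallach computation that an equal-rank discrete series with trivial infinitesimal character has one-dimensional cohomology concentrated in degree $\frac{1}{2}\dim X$. Both arguments are sound; what yours buys is that it needs no lattice (hence no appeal to Borel's existence theorem for cocompact lattices) and it is the ``intended'' application of the main theorem, making transparent why only the discrete series contribute; what the paper's buys is brevity, outsourcing the hard representation theory to a single theorem of Borel, and --- precisely because it bypasses Theorems \ref{thm:main_compute} and \ref{thm:DS_modification} --- serving as an independent consistency check of them. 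Two points you should make explicit in a final write-up: $G$ must be postliminal (Harish-Chandra's type I theorem) for Theorem \ref{thm:DS_modification} to apply at all, and it is the multiplicity-one statement $\dim_{\C}H^{q_0}(\mathfrak{g},K;\mathcal{H}_{\omega,K})=1$ (not mere nonvanishing) that makes the atomic sum collapse exactly to $\sum_{\omega\in\hat{G}_d,\,\alpha_\omega\equiv 0}\mu(\omega)$.
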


We observe that, by Remark 2.9 in \cite{Bor85}, in the second case there exists discrete series with trivial infinitesimal character, so $\beta^n_{(2)}(G)>0$.

\begin{proof} By a celebrated result of Borel \cite{Bor63}, the group $G$ admits a torsion-free co-compact lattice $\Gamma$, and $\beta^n_{(2)}(\Gamma)= \operatorname{covol}(\Gamma)\beta^n_{(2)}(G)$. Now $\Gamma$ is quasi-isometric to $X$, and by the quasi-isometry invariance of $L^2$-cohomology (see e.g. Dodziuk \cite{Dod77}), we have $\underline{H}^n_{(2)}(\Gamma)=\underline{H}^n_{(2)}(X)$. Now the latter has been computed as a $G$-module by Borel \cite{Bor85}:
$$\underline{H}^n_{(2)}(X)=\left\{
\begin{array}{ccccc }
   0   &  \mbox{if either} & rk(G)\neq rk(K) & \mbox{or} & n\neq\frac{1}{2}\dim(X)  \\
   \oplus_{\omega\in\hat{G}_d,\alpha_\omega\equiv 0} \omega  &   \mbox{if} &rk(G)=rk(K) & \mbox{and} & n=\frac{1}{2}\dim(X)
   \end{array}\right.$$
   This already takes care of the first case of our result. For the second case, taking $\Gamma$-dimensions we have:
   $$\beta^n_{(2)}(\Gamma)=\dim_\Gamma \underline{H}^n_{(2)}(X)=\sum_{\omega\in\hat{G}_d,\alpha_\omega\equiv 0}\dim_\Gamma \omega.$$
   But by equation (3.3) in Atiyah-Schmid \cite{AS77} (nicely spelled out in section 3.3.d of \cite{GHJ89}), we have, for $\omega$ a square-integrable representation: $\dim_\Gamma \omega=\operatorname{covol}(\Gamma)\mu(\omega)$. Putting everything together, we get the desired formula. Observe that, by Theorem 3.16 in \cite{AS77}, for a suitable normalization of the Haar measure of $G$, formal dimensions $\mu(\omega)$ are given by Weyl's dimension formula, hence are positive integers.
\end{proof}

\begin{example}\label{PSL2} Endow $G=PSL_2(\R)$ with the Haar measure that, when viewed on the Poincar\'e disk, gives area $\pi$ to ideal triangles. Then there are exactly two discrete series representations $\omega^+,\omega^-$ with trivial infinitesimal character: there are described geometrically as the representations of $G$ on holomorphic and anti-holomorphic square-integrable 1-forms on the Poincar\'e disk. By page 148 in \cite{GHJ89}, we have $\mu(\omega^+)=\mu(\omega^-)=\frac{1}{4\pi}$, so $\beta^1_{(2)}(G)=\frac{1}{2\pi}$, and $\beta^n_{(2)}(G)=0$ for $n\neq 1$. (Compare with Example 3.16 in \cite{Pe11}).

Of course the value of $\beta^1_{(2)}(G)$ can also be obtained through the formula $\beta^1_{(2)}(G)=\frac{\beta^1_{(2)}(\Gamma)}{\operatorname{covol}(\Gamma)}$, taking e.g. for $\Gamma$ the fundamental group of a closed surface of genus $g\geq 2$: then $\beta^1_{(2)}(\Gamma)=2g-2$, and $\operatorname{covol}(\Gamma)=2\pi(2g-2)$ by Gauss-Bonnet.
\end{example}

\begin{remark*} Set $\ell_0=rk(G)-rk(K)$ and assume that $\ell_0>0$. In Proposition 2.8 of \cite{Bor85} (and Remark 1 following it), Borel proves a non-vanishing result implying that, for $i\in ]\frac{\dim X-\ell_0}{2},\frac{\dim X+\ell_0}{2}]$, the cohomology $H^i(G,L^2(G))$ is infinite-dimensional, although the corresponding reduced cohomology vanishes. This illustrates the importance of working with reduced cohomology in our Theorem \ref{thm:main_compute}.
\end{remark*}

\subsection*{Simple algebraic groups over local fields}

Let $K$ be a non-Archimedean local field with residue field $k$, and let $\mathbb{G}$ be a simple algebraic group defined over $K$. We shall consider the group $G=:\mathbb{G}(K)$ of $K$-points of $\mathbb{G}$. Let $X$ be the Bruhat-Tits building of $G$; set $\ell=\dim(X)$, so that $\ell$ is the $K$-rank of $\mathbb{G}$. 

Let $St$ be the {\it Steinberg module} of $G$, i.e. the representation of $G$ on the $L^2$-cohomology $\underline{H}^\ell_{(2)}(X)$. It follows from 4.10 and 6.2 in \cite{Bor76} (see also Theorem 6.2 in \cite{DJ02}), that $St$ is an irreducible representation of $G$, hence $St\in\hat{G}_d$.

\begin{cor}\label{algebraic} With notations as above, assume that the residue field is large enough (i.e. $|k| > \frac{1724^\ell}{25}$); then 
$\beta^n_{(2)}(G)=\left\{
\begin{array}{ccc }
   0   &  \mbox{if} & n\neq\ell;  \\
   \mu(St) &   \mbox{if} &n=\ell.
\end{array}\right.$
\end{cor}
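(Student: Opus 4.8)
The plan is to follow the blueprint of Corollary \ref{ssgroups}, with the Riemannian symmetric space replaced by the Euclidean building $X$ and Borel's computation of the $L^2$-cohomology of symmetric spaces replaced by the corresponding computation for buildings. First I would pass to a lattice: the group $G=\mathbb{G}(K)$ admits a torsion-free cocompact lattice $\Gamma$ (these exist by the standard arithmetic constructions of cocompact lattices in simple algebraic groups over local fields), and then by the covolume formula of \cite{Pe11,KPV13} one has $\beta^n_{(2)}(\Gamma)=\operatorname{covol}(\Gamma)\,\beta^n_{(2)}(G)$ for every $n$. It therefore suffices to compute $\beta^n_{(2)}(\Gamma)$ and divide by $\operatorname{covol}(\Gamma)$.

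Since $\Gamma$ is torsion-free it acts freely and cocompactly on the contractible building $X$, so $\Gamma\backslash X$ is a finite classifying space for $\Gamma$ and $\Gamma$ is quasi-isometric to $X$. By the quasi-isometry invariance of reduced $L^2$-cohomology (see e.g.~Dodziuk \cite{Dod77}) we then identify $\underline{H}^n_{(2)}(\Gamma)\cong\underline{H}^n_{(2)}(X)$ as $\Gamma$-modules, so that $\beta^n_{(2)}(\Gamma)=\dim_\Gamma\underline{H}^n_{(2)}(X)$, where $X$ carries the $\Gamma$-action restricted from $G$.

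The substance of the argument, and the place where the hypothesis $|k|>\tfrac{1724^\ell}{25}$ is used, is the identification of $\underline{H}^n_{(2)}(X)$ as a $G$-module. For a building that is thick enough, the reduced $L^2$-cohomology is concentrated in the top degree $\ell=\dim X$: I would cite 4.10 and 6.2 of \cite{Bor76} together with the vanishing theorem of Dymara--Januszkiewicz \cite{DJ02}, whose explicit thickness threshold is exactly the stated bound on $|k|$, to conclude that $\underline{H}^n_{(2)}(X)=0$ for $n\neq\ell$ and $\underline{H}^\ell_{(2)}(X)=St$. I expect this concentration/vanishing statement to be the main obstacle, since it is the only genuinely representation-theoretic input; everything numerical in the corollary is inherited from it.

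It remains to evaluate dimensions. For $n\neq\ell$ we get $\beta^n_{(2)}(\Gamma)=0$, hence $\beta^n_{(2)}(G)=0$. For $n=\ell$ we have $\beta^\ell_{(2)}(\Gamma)=\dim_\Gamma St$, where $St\in\hat{G}_d$ is square-integrable; the formal-dimension identity $\dim_\Gamma\omega=\operatorname{covol}(\Gamma)\,\mu(\omega)$ for square-integrable $\omega$ and cocompact $\Gamma$ — the same ingredient used in Corollary \ref{ssgroups} via \cite{AS77} — then yields $\beta^\ell_{(2)}(\Gamma)=\operatorname{covol}(\Gamma)\,\mu(St)$. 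Dividing by $\operatorname{covol}(\Gamma)$ gives $\beta^\ell_{(2)}(G)=\mu(St)$, completing the computation. Alternatively, one could bypass the lattice entirely: since $X$ is a proper cocompact model, $\underline{H}^n(G,L^2G)\cong\underline{H}^n_{(2)}(X)$ as $LG$-modules, and the single discrete-series constituent $St$ appearing in degree $\ell$ contributes $\dim_{(LG,\psi)}$ equal to $\mu(St)$ by Lemma \ref{lma:formal_dimension}.
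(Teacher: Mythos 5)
Your main argument breaks down at its very first step: the claim that $G=\mathbb{G}(K)$ admits a torsion-free cocompact lattice. This is true when $K$ has characteristic zero (cocompact arithmetic lattices exist by Borel--Harder, and Selberg's lemma then provides a torsion-free subgroup of finite index), but it fails for local fields of positive characteristic, which the corollary is meant to cover: the hypothesis is only that the residue field $k$ is large, not that $\operatorname{char}K=0$. Over $K=\mathbb{F}_q((t))$, lattices in higher-rank simple groups are arithmetic (Margulis; extended to positive characteristic by Venkataramana), and a cocompact arithmetic lattice would have to come from a form of $\mathbb{G}$ that is anisotropic over a global function field; by Harder's theorem such anisotropic forms exist only in type $A$. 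Consequently groups such as $Sp_4(\mathbb{F}_q((t)))$ or $G_2(\mathbb{F}_q((t)))$ --- which satisfy the hypotheses of the corollary once $q$ is large --- contain no cocompact lattice at all, so no version of your reduction can work for them. (Even in type $A$, where cocompact lattices do exist, virtual torsion-freeness is delicate in characteristic $p$, since Selberg's lemma is a characteristic-zero statement.) This is exactly why the blueprint of Corollary \ref{ssgroups} does not transfer: there, Borel's theorem \cite{Bor63} supplies the cocompact lattice, and no analogue is available here.

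By contrast, the alternative you sketch in your final sentence is essentially the paper's actual proof, which avoids lattices altogether and argues directly on $G$. By Theorem E of \cite{DJ02} (with Proposition 1.7 of \cite{DJ02} converting the hypothesis $|k|>1724^{\ell}/25$ into the thickness assumption needed there), one has $H^n(G,\pi)=0$ for every unitary representation $\pi$ of $G$ and every $n\neq\ell$; applying this to the regular representation on $L^2(G)$ gives $\beta^n_{(2)}(G)=0$ for $n\neq\ell$ outright, with no reduced-versus-unreduced subtlety since the cohomology itself vanishes. For $n=\ell$, Corollary I of \cite{DJ02} identifies $H^\ell(G,L^2(G))$ with $\underline{H}^\ell_{(2)}(X)$, which is the irreducible square-integrable representation $St$ by \cite{Bor76} and \cite{DJ02}; its $LG$-dimension is the formal dimension $\mu(St)$, for instance by Lemma \ref{lma:formal_dimension} (equivalently, via Theorem \ref{thm:DS_modification}, since $St\in\hat{G}_d$ is then the unique class with nonvanishing cohomology in degree $\ell$). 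So if you promote that closing remark to the main argument and discard the lattice reduction, you recover a complete proof in the stated generality; as written, your main line establishes the corollary only when $K$ has characteristic zero.
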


\begin{proof} We appeal to a series of results of Dymara-Januszkiewicz \cite{DJ02}. Assume first $n\neq\ell$. Taking into account Proposition 1.7 of \cite{DJ02} (which deals with the assumption that $|k|$ is large enough), we may appeal to Theorem E of \cite{DJ02}: for $n\neq\ell$, we have $H^n(G,\pi)=0$ for every unitary representation of $G$, in particular for the regular representation on $L^2(G)$. Now assume that $n=\ell$. By Corollary I in \cite{DJ02}, $H^\ell(G,L^2(G))$ identifies as a $G$-module with $\underline{H}^\ell_{(2)}(X)$, which is just $St$ by the remarks preceding the Corollary.
\end{proof}

\begin{remark*} If Haar measure of $G$ is normalized in such a way that the stabilizer in $G$ of a chamber in $X$ has measure 1, then there is an explicit formula for $\mu(St)$ in terms of the generating function of the affine Weyl group of $G$, see Theorem 3.1 in \cite{Dym03}.
\end{remark*}

\subsection*{Groups acting on trees}

Let $X$ be a locally finite tree. We will deal with the class of groups considered in \cite{Ols77,FTN91}, namely closed, non-compact subgroups $G$ of $Aut(X)$ that act transitively on the boundary $\partial X$. By Proposition 10.2 in Chapter I of \cite{FTN91}, the group $G$ has at most two orbits on vertices of $X$, i.e. $X$ is regular or bi-regular. If $G$ has only one orbit on vertices, then $G$ has an open subgroup of index 2 that has two orbits, so we will restrict to that case, and assume that $X$ is the $(k,\ell)$-bi-regular tree (with the proviso that, if $k=\ell$, then $G$ preserves a bi-partition of $X$). Since $G$ acts transitively on edges, we normalize Haar measure so that edge stabilizers have measure 1. 

\begin{proposition}\label{trees} $\beta^1_{(2)}(G)=\frac{k\ell - k-\ell}{k\ell}$ and $\beta^n_{(2)}(G)=0$ for $n\neq 1$.
\end{proposition}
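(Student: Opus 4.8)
The plan is to read off the $L^2$-Betti numbers from the action of $G$ on its tree $X$, which is contractible and one-dimensional, and on which $G$ acts properly (all cell-stabilizers are compact open) and cocompactly: there is a single orbit of edges and, since $X$ is the $(k,\ell)$-biregular tree, exactly two orbits of vertices, represented by a vertex $v_k$ of degree $k$ and a vertex $v_\ell$ of degree $\ell$. Because the complex computing the $L^2$-cohomology is then concentrated in degrees $0$ and $1$, the numbers $\beta^n_{(2)}(G)$ vanish for $n\geq 2$; and $\beta^0_{(2)}(G)=0$ since $G$ is non-compact. Thus the whole content is the single number $\beta^1_{(2)}(G)$, which equals $-\chi^{(2)}(G)$, where $\chi^{(2)}(G):=\sum_n(-1)^n\beta^n_{(2)}(G)$ is the $L^2$-Euler characteristic.

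First I would pin down the Haar volumes of the stabilizers. By normalization $\operatorname{vol}(G_e)=1$. I claim $G_{v_k}$ acts transitively on the $k$ edges issuing from $v_k$: given two such edges, edge-transitivity of $G$ provides $g\in G$ carrying one to the other, and since $g$ preserves vertex degrees (resp. the bipartition, when $k=\ell$) it must fix the degree-$k$ endpoint $v_k$, so $g\in G_{v_k}$. As the stabilizer of an edge at $v_k$ inside $G_{v_k}$ is exactly $G_e$ (an element fixing the edge fixes both endpoints, again by the degree/bipartition argument), the open subgroup $G_e$ has index $k$ in $G_{v_k}$, whence $\operatorname{vol}(G_{v_k})=k\cdot\operatorname{vol}(G_e)=k$; symmetrically $\operatorname{vol}(G_{v_\ell})=\ell$. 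With these volumes the $L^2$-Euler characteristic is $\chi^{(2)}(G)=\frac1{\operatorname{vol}(G_{v_k})}+\frac1{\operatorname{vol}(G_{v_\ell})}-\frac1{\operatorname{vol}(G_e)}=\frac1k+\frac1\ell-1$, giving $\beta^1_{(2)}(G)=1-\frac1k-\frac1\ell=\frac{k\ell-k-\ell}{k\ell}$.

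To make this rigorous I would route it through a lattice rather than invoke an Euler-characteristic formula for $G$ directly. The group $G$ is unimodular and cocompact on $X$, hence contains a torsion-free cocompact lattice $\Gamma$ (Bass--Kulkarni); being discrete and torsion-free, $\Gamma$ acts freely on $X$, so $\Gamma\cong F_r$ is free, with $\beta^0_{(2)}(\Gamma)=0$, $\beta^1_{(2)}(\Gamma)=r-1$ and $\beta^n_{(2)}(\Gamma)=0$ for $n\geq2$. The relation $\beta^n_{(2)}(\Gamma)=\operatorname{covol}(\Gamma)\cdot\beta^n_{(2)}(G)$ then immediately yields the vanishing for $n\neq1$ and reduces the remaining case to computing $(r-1)/\operatorname{covol}(\Gamma)$. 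Writing $Y=\Gamma\backslash X$ for the finite quotient graph, one has $r-1=-\chi(Y)=|E(Y)|-|V_k(Y)|-|V_\ell(Y)|$, while disintegrating Haar measure along the transitive map $G\to V_k(X),\ g\mapsto g v_k$ gives $\operatorname{covol}(\Gamma)=|V_k(Y)|\cdot\operatorname{vol}(G_{v_k})=|E(Y)|$, using $|E(Y)|=k|V_k(Y)|=\ell|V_\ell(Y)|$; dividing reproduces $1-\frac1k-\frac1\ell$.

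The main obstacle is the bookkeeping that makes the Euler-characteristic computation legitimate: either justifying the cell-wise formula $\chi^{(2)}(G)=\sum_\sigma(-1)^{\dim\sigma}\operatorname{vol}(G_\sigma)^{-1}$ for the locally compact group $G$ directly, or -- as above -- producing a torsion-free cocompact lattice and carrying out the covolume disintegration carefully. Everything else (the degree/bipartition arguments for the stabilizer volumes, the $L^2$-Betti numbers of a free group, and the vanishing outside degree one) is routine.
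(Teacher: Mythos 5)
Your proposal is correct and its rigorous core is exactly the paper's argument: take a cocompact free lattice $\Gamma$ via Bass--Kulkarni, use $\beta^n_{(2)}(\Gamma)=\operatorname{covol}(\Gamma)\cdot\beta^n_{(2)}(G)$, compute $\beta^1_{(2)}(\Gamma)=r-1$ from the Euler characteristic of the quotient graph, and identify $\operatorname{covol}(\Gamma)=|E(Y)|$ (the paper gets this directly from the free action on edges and the normalization $\operatorname{vol}(G_e)=1$, while you pass through vertex-stabilizer volumes, which amounts to the same covolume disintegration). The opening $L^2$-Euler-characteristic heuristic is fine as motivation but, as you yourself note, the lattice route is what carries the proof, just as in the paper.
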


\begin{proof} It is known (see e.g. Theorem 4.7 in \cite{BK90}) that $G$ contains a cocompact lattice $\Gamma$ which is a free group, say $\Gamma\simeq\mathbb{F}_n$. Let $\Gamma\backslash X=(V,E)$ be the quotient graph; this is a bipartite graph with fundamental group $\Gamma$, we denote by $V_k$ (resp. $V_\ell$) the set of vertices of degree $k$ (resp. degree $\ell$). Looking at the bipartition we get $|E|=k.|V_k|=\ell.|V_\ell |$. Now $\Gamma\backslash X$ has the homotopy type of a bouquet of $n$ circles, so computing in two ways its Euler characteristic we get $1-n=|V|-|E|=|V_k|+|V_\ell|-|E|=|E|(\frac{1}{k}+\frac{1}{\ell}-1)$, hence (since $\Gamma$ acts freely on $X$):
$$\operatorname{covol}(\Gamma)=|E|=\frac{(n-1)k\ell}{k\ell -k-\ell}$$
hence
$$\beta^1_{(2)}(G)=\frac{\beta^1_{(2)}(\Gamma)}{\operatorname{covol}(\Gamma)}=\frac{n-1}{\operatorname{covol}(\Gamma)}=\frac{k\ell - k-\ell}{k\ell},$$
and $\beta^n_{(2)}(G)=\frac{\beta^n_{(2)}(\Gamma)}{\operatorname{covol}(\Gamma)}=0$ for $n\neq 1$.

\end{proof}

\begin{example} For $G=Aut(X)$, we could take the free product $\Gamma=\mathbb{Z}/k\mathbb{Z}\ast\mathbb{Z}/\ell\mathbb{Z}$ as a cocompact lattice with covolume 1 in $G$. Since $\beta^1_{(2)}(\Gamma)=\frac{k\ell - k-\ell}{k\ell}$ (see section 4 in \cite{CG86}), in this special case we get another proof of the same formula.
\end{example}

For $G$ in our class of groups,  the dual $\hat{G}$ was described by Ol'shanskii \cite{Ols77} (see also Chapter III in \cite{FTN91}). However it is still an open question whether $G$ is postliminal\footnote{This is however known either if $G=Aut(X)$ (see \cite{Ols75}) or if $G$ is a rank 1 simple algebraic group over a non-archimedean field (see \cite{Ber74}).}. Nevertheless Ol'shanskii's description allowed Nebbia \cite{Neb12} to prove that  there exists a unique representation $\sigma\in{\hat G}$ with $\dim_\mathbb{C} H^1(G,\sigma)=1$, while $H^1(G,\omega)=0$ for all $\omega\in{\hat G}\backslash\{\sigma\}$; it turns out that $\sigma$ lies in the discrete series of $G$. Combining Nebbia's result with Proposition \ref{trees} and our main result, we can compute the formal dimension of $\sigma$:

\begin{cor} If $G$ is postliminal, then $\mu(\sigma)= \frac{k\ell - k-\ell}{k\ell}$.
\hfill $\square$
\end{cor}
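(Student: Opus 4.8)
The plan is to apply Theorem \ref{thm:DS_modification} in degree $n=1$ and then read off $\mu(\sigma)$ by comparing with the value of $\beta^1_{(2)}(G)$ already obtained in Proposition \ref{trees}. The hypothesis here is precisely that $G$ is postliminal; since $G$ is closed in $\mathrm{Aut}(X)$ with $X$ locally finite (so $G$ is second countable and totally disconnected) and is unimodular because it acts transitively on $\partial X$, it is a plcus group, and Theorem \ref{thm:DS_modification} applies.

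Spelling out that theorem for $n=1$,
\[
\beta^1_{(2)}(G) = \sum_{\omega \in \hat{G}_d} \mu(\omega)\,\dim_{\mathbb{C}} H^1(G,\mathcal{H}_\omega) + \int_{\hat{G}\backslash\hat{G}_d} \dim_{\mathbb{C}} \underline{H}^1(G,\mathcal{H}_\omega)\, \mathrm{d}\mu(\omega),
\]
I would feed in Nebbia's computation \cite{Neb12}: $\dim_{\mathbb{C}} H^1(G,\sigma) = 1$, while $H^1(G,\omega) = 0$ for every $\omega \in \hat{G}\setminus\{\sigma\}$, with $\sigma \in \hat{G}_d$. In the discrete-series sum every term with $\omega \neq \sigma$ vanishes, leaving exactly $\mu(\sigma)\cdot 1$; recall that $\sigma$, being square-integrable, is an atom of the Plancherel measure contributing its formal dimension $\mu(\sigma)$, as noted after Theorem \ref{thm:main_compute}. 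In the integral over the non-atomic part $\hat{G}\setminus\hat{G}_d$, every $\omega$ is distinct from $\sigma$, so $H^1(G,\mathcal{H}_\omega)=0$; since $\underline{H}^1$ is a quotient of $H^1$, the reduced cohomology also vanishes there and the integral is $0$. Hence $\beta^1_{(2)}(G) = \mu(\sigma)$.

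Finally I would equate this with Proposition \ref{trees}, which gives $\beta^1_{(2)}(G) = \frac{k\ell - k - \ell}{k\ell}$, yielding the asserted value $\mu(\sigma) = \frac{k\ell - k - \ell}{k\ell}$. There is no genuine obstacle once the cited inputs are in place; the only points deserving care are the bookkeeping just described—that the single atom $\sigma$ carries weight $\mu(\sigma)$ while the continuous spectrum contributes nothing. One could equally invoke Theorem \ref{thm:main_compute} directly, since postliminality forces $H^1(G,\mathcal{H}_\omega) = \underline{H}^1(G,\mathcal{H}_\omega)$ for every $\omega$ by the Guichardet criterion recalled in the footnote to Theorem \ref{thm:DS_modification}, so that Nebbia's $H^1$-computation inserts verbatim into the reduced-cohomology formula.
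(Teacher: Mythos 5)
Your proof is correct and is exactly the paper's (implicit) argument: the corollary is stated there precisely as the combination of Nebbia's computation, Proposition \ref{trees}, and Theorem \ref{thm:DS_modification} (or Theorem \ref{thm:main_compute}), with the single atom $\sigma$ contributing $\mu(\sigma)\cdot 1$ and the rest of $\hat{G}$ contributing nothing. One small caveat on your closing aside: postliminality alone does not make every point of $\hat{G}$ closed (the dual of a type I group need not be $T_1$), so the Guichardet-criterion footnote really pertains to the square-integrable $\omega$; but this does not affect your argument, since for $\omega\neq\sigma$ Nebbia gives $H^1(G,\mathcal{H}_\omega)=0$, hence $\underline{H}^1(G,\mathcal{H}_\omega)=0$, and for $\sigma$ the finite-dimensionality of $H^1(G,\sigma)$ already forces it to be Hausdorff by \cite[Proposition III.2.4]{Gu80}.
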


When $X$ is the $k$-regular tree and $G$ has two orbits on vertices, we recover the formula $\mu(\sigma)=\frac{k-2}{k}$ from the Remark following Theorem 2.6 in Chapter III of \cite{FTN91}.




\begin{bibdiv} 
\begin{biblist}

\bib{At76}{incollection}{
    AUTHOR = {Atiyah, M. F.},
     TITLE = {Elliptic operators, discrete groups and von {N}eumann
              algebras},
 BOOKTITLE = {Colloque ``{A}nalyse et {T}opologie'' en l'{H}onneur de
              {H}enri {C}artan ({O}rsay, 1974)},
     PAGES = {43--72. Ast\'erisque, No. 32-33},
 PUBLISHER = {Soc. Math. France},
   ADDRESS = {Paris},
      YEAR = {1976},
   MRCLASS = {58G10 (46L10 22E45)},
  MRNUMBER = {0420729 (54 \#8741)},
MRREVIEWER = {R. D. Moyer},
}

\bib{AS77}{article}{
AUTHOR = {Michael Atiyah and Wilfried Schmid},
     TITLE = {A geometric construction of the discrete series for semisimple
              {L}ie groups},
   JOURNAL = {Invent. Math.},
  FJOURNAL = {Inventiones Mathematicae},
    VOLUME = {42},
      YEAR = {1977},
     PAGES = {1--62},
      ISSN = {0020-9910},
   MRCLASS = {22E45},
  MRNUMBER = {0463358 (57 \#3310)},
MRREVIEWER = {P. C. Trombi},
}

\bib{BK90}{article}{
AUTHOR = {Hyman Bass and Ravi Kulkarni},
     TITLE = {Uniform tree lattices},
   JOURNAL = {J. Amer. Math. Soc.},
  FJOURNAL = {Journal of the American Mathematical Society},
    VOLUME = {3},
      YEAR = {1990},
    NUMBER = {4},
     PAGES = {843--902},
      ISSN = {0894-0347},
   MRCLASS = {20E08 (05C25 20F32 22E40)},
  MRNUMBER = {1065928 (91k:20034)},
MRREVIEWER = {Fr{\'e}d{\'e}ric Paulin},
       DOI = {10.2307/1990905},
       URL = {http://dx.doi.org/10.2307/1990905},
}
		
\bib{Ber74}{article}{
AUTHOR = {Bernstein, I. N.},
     TITLE = {All reductive $p$-adic groups are of type I},
   JOURNAL = {Funkcional. Anal. i Prilozen.},
  FJOURNAL = {Akademija Nauk SSSR. Funkcionalnyi Analiz i ego
              Prilozenija},
    VOLUME = {8},
      YEAR = {1974},
    NUMBER = {2},
     PAGES = {3--6},
      ISSN = {0374-1990},
   MRCLASS = {22E50},
  MRNUMBER = {0348045 (50 \#543)},
MRREVIEWER = {H. Moscovici},
}

\bib{Bor63}{article}{
AUTHOR = {Borel, Armand},
     TITLE = {Compact {C}lifford-{K}lein forms of symmetric spaces},
   JOURNAL = {Topology},
  FJOURNAL = {Topology. An International Journal of Mathematics},
    VOLUME = {2},
      YEAR = {1963},
     PAGES = {111--122},
      ISSN = {0040-9383},
   MRCLASS = {22.70 (53.73)},
  MRNUMBER = {0146301 (26 \#3823)},
MRREVIEWER = {Y. Matsushima},
}

\bib{Bor76}{article}{
AUTHOR = {Borel, Armand},
     TITLE = {Admissible representations of a semi-simple group over a local
              field with vectors fixed under an {I}wahori subgroup},
   JOURNAL = {Invent. Math.},
  FJOURNAL = {Inventiones Mathematicae},
    VOLUME = {35},
      YEAR = {1976},
     PAGES = {233--259},
      ISSN = {0020-9910},
   MRCLASS = {22E50 (20G05)},
  MRNUMBER = {0444849 (56 \#3196)},
MRREVIEWER = {Allan J. Silberger},
}

\bib{Bor85}{article} {
    AUTHOR = {Borel, Armand},
     TITLE = {The {$L^2$}-cohomology of negatively curved {R}iemannian
              symmetric spaces},
   JOURNAL = {Ann. Acad. Sci. Fenn. Ser. A I Math.},
  FJOURNAL = {Annales Academiae Scientiarum Fennicae. Series A I.
              Mathematica},
    VOLUME = {10},
      YEAR = {1985},
     PAGES = {95--105},
      ISSN = {0066-1953},
     CODEN = {AAFMAT},
   MRCLASS = {22E46 (53C35)},
  MRNUMBER = {802471 (87f:22013)},
MRREVIEWER = {Avner Ash},
}

\bib{BW80}{book}{
    AUTHOR = {Armand Borel and Nolan R. Wallach},
     TITLE = {Continuous cohomology, discrete subgroups, and representations
              of reductive groups},
    SERIES = {Annals of Mathematics Studies},
    VOLUME = {94},
 PUBLISHER = {Princeton University Press},
   ADDRESS = {Princeton, N.J.},
      YEAR = {1980},
     PAGES = {xvii+388},
      ISBN = {0-691-08248-0; 0-691-08249-9},
}

\bib{CG86}{article} {
    AUTHOR = {Jeff Cheeger and Mikhael Gromov},
     TITLE = {{$L\sb 2$}-cohomology and group cohomology},
   JOURNAL = {Topology},
  FJOURNAL = {Topology. An International Journal of Mathematics},
    VOLUME = {25},
      YEAR = {1986},
    NUMBER = {2},
     PAGES = {189--215},
      ISSN = {0040-9383},
     CODEN = {TPLGAF},
   MRCLASS = {58G12 (20J05 22E41 53C20 58A14)},
  MRNUMBER = {837621 (87i:58161)},
MRREVIEWER = {Avner Ash},
       DOI = {10.1016/0040-9383(86)90039-X},
       URL = {http://dx.doi.org/10.1016/0040-9383(86)90039-X},
}

\bib{Dix57}{article}{
AUTHOR = {Dixmier, Jacques},
     TITLE = {Sur les repr\'esentations unitaires des groupes de {L}ie
              alg\'ebriques},
   JOURNAL = {Ann. Inst. Fourier, Grenoble},
  FJOURNAL = {Universit\'e de Grenoble. Annales de l'Institut Fourier},
    VOLUME = {7},
      YEAR = {1957},
     PAGES = {315--328},
      ISSN = {0373-0956},
   MRCLASS = {22.00 (46.00)},
  MRNUMBER = {0099380 (20 \#5820)},
MRREVIEWER = {F. I. Mautner},
}

\bib{Dix69}{book}{
    AUTHOR = {Dixmier, Jacques},
     TITLE = {Les {$C\sp *$}-alg\`ebres et leurs repr\'esentations},
    SERIES = {Les Grands Classiques Gauthier-Villars. [Gauthier-Villars
              Great Classics]},
      NOTE = {Reprint of the second (1969) edition},
 PUBLISHER = {\'Editions Jacques Gabay},
   ADDRESS = {Paris},
      YEAR = {1996},
     PAGES = {403},
      ISBN = {2-87647-013-6},
   MRCLASS = {46Lxx (01A75 46L10)},
  MRNUMBER = {1452364 (98a:46066)},
}

\bib{Dod77}{article}{
AUTHOR = {Dodziuk, Jozef},
     TITLE = {de {R}ham-{H}odge theory for {$L^{2}$}-cohomology of
              infinite coverings},
   JOURNAL = {Topology},
  FJOURNAL = {Topology. An International Journal of Mathematics},
    VOLUME = {16},
      YEAR = {1977},
    NUMBER = {2},
     PAGES = {157--165},
      ISSN = {0040-9383},
   MRCLASS = {58G10},
  MRNUMBER = {0445560 (56 \#3898)},
MRREVIEWER = {M. F. Atiyah},
}

\bib{DJ02}{article}{
AUTHOR = {Jan Dymara and Tadeusz Januszkiewicz},
     TITLE = {Cohomology of buildings and their automorphism groups},
   JOURNAL = {Invent. Math.},
  FJOURNAL = {Inventiones Mathematicae},
    VOLUME = {150},
      YEAR = {2002},
    NUMBER = {3},
     PAGES = {579--627},
      ISSN = {0020-9910},
     CODEN = {INVMBH},
   MRCLASS = {20E42 (20F55 20J06 22E50)},
  MRNUMBER = {1946553 (2003j:20052)},
MRREVIEWER = {Alain Valette},
       DOI = {10.1007/s00222-002-0242-y},
       URL = {http://dx.doi.org/10.1007/s00222-002-0242-y},
 }
 
 \bib{Dym03}{article}{
 AUTHOR = {Dymara, Jan},
     TITLE = {{$L^2$}-cohomology of buildings with fundamental class},
   JOURNAL = {Proc. Amer. Math. Soc.},
  FJOURNAL = {Proceedings of the American Mathematical Society},
    VOLUME = {132},
      YEAR = {2004},
    NUMBER = {6},
     PAGES = {1839--1843 (electronic)},
      ISSN = {0002-9939},
     CODEN = {PAMYAR},
   MRCLASS = {58J22 (20F55 20F65)},
  MRNUMBER = {2051148 (2005f:58029)},
MRREVIEWER = {Alain Valette},
       DOI = {10.1090/S0002-9939-03-07234-4},
       URL = {http://dx.doi.org/10.1090/S0002-9939-03-07234-4},
}
 
\bib{GHJ89}{book}{
AUTHOR = {Frederick M. Goodman and Pierre de la Harpe and Vaughan F. R. Jones},
     TITLE = {Coxeter graphs and towers of algebras},
    SERIES = {Mathematical Sciences Research Institute Publications},
    VOLUME = {14},
 PUBLISHER = {Springer-Verlag},
   ADDRESS = {New York},
      YEAR = {1989},
     PAGES = {x+288},
      ISBN = {0-387-96979-9},
   MRCLASS = {46L10 (17B99 46-02 46L37 46L60 58G99)},
  MRNUMBER = {999799 (91c:46082)},
MRREVIEWER = {Roberto Longo},
       DOI = {10.1007/978-1-4613-9641-3},
       URL = {http://dx.doi.org/10.1007/978-1-4613-9641-3},
}

\bib{FTN91}{book}{
AUTHOR = {Alessandro Fig{\`a}-Talamanca and Claudio Nebbia}
     TITLE = {Harmonic analysis and representation theory for groups acting
              on homogeneous trees},
    SERIES = {London Mathematical Society Lecture Note Series},
    VOLUME = {162},
 PUBLISHER = {Cambridge University Press},
   ADDRESS = {Cambridge},
      YEAR = {1991},
     PAGES = {x+151},
      ISBN = {0-521-42444-5},
   MRCLASS = {22D12 (20E08 22E50 43A85 43A90 57M07)},
  MRNUMBER = {1152801 (93f:22004)},
MRREVIEWER = {Nobuaki Obata},
       DOI = {10.1017/CBO9780511662324},
       URL = {http://dx.doi.org/10.1017/CBO9780511662324},
}

\bib{Gu80}{book}{
    AUTHOR = {Guichardet, Alain},
     TITLE = {Cohomologie des groupes topologiques et des alg\`ebres de
              {L}ie},
    SERIES = {Textes Math\'ematiques [Mathematical Texts]},
    VOLUME = {2},
 PUBLISHER = {CEDIC},
   ADDRESS = {Paris},
      YEAR = {1980},
     PAGES = {xvi+394},
      ISBN = {2-7124-0715-6},
}

\bib{KPV13}{article}{
	AUTHOR = {David Kyed and Henrik Densing Petersen and Stefaan Vaes},
	TITLE = {$L^2$-Betti numbers of locally compact groups and their cross section equivalence relations},
	note={Preprint, arXiv:1302.6753},
}

\bib{Lu98I}{article}{
    AUTHOR = {L{\"u}ck, Wolfgang},
     TITLE = {Dimension theory of arbitrary modules over finite von
              {N}eumann algebras and {$L\sp 2$}-{B}etti numbers. {I}.
              {F}oundations},
   JOURNAL = {J. Reine Angew. Math.},
  FJOURNAL = {Journal f\"ur die Reine und Angewandte Mathematik},
    VOLUME = {495},
      YEAR = {1998},
     PAGES = {135--162},
      ISSN = {0075-4102},
     CODEN = {JRMAA8},
}

\bib{Lu98II}{article}{
    AUTHOR = {L{\"u}ck, Wolfgang},
     TITLE = {Dimension theory of arbitrary modules over finite von
              {N}eumann algebras and {$L\sp 2$}-{B}etti numbers. {II}.
              {A}pplications to {G}rothendieck groups, {$L\sp 2$}-{E}uler
              characteristics and {B}urnside groups},
   JOURNAL = {J. Reine Angew. Math.},
  FJOURNAL = {Journal f\"ur die Reine und Angewandte Mathematik},
    VOLUME = {496},
      YEAR = {1998},
     PAGES = {213--236},
      ISSN = {0075-4102},
     CODEN = {JRMAA8},
}

\bib{Lu02}{book} {
    AUTHOR = {L{\"u}ck, Wolfgang},
     TITLE = {{$L\sp 2$}-invariants: theory and applications to geometry and
              {$K$}-theory},
    SERIES = {Ergebnisse der Mathematik und ihrer Grenzgebiete. 3. Folge. A
              Series of Modern Surveys in Mathematics [Results in
              Mathematics and Related Areas. 3rd Series. A Series of Modern
              Surveys in Mathematics]},
    VOLUME = {44},
 PUBLISHER = {Springer-Verlag},
   ADDRESS = {Berlin},
      YEAR = {2002},
     PAGES = {xvi+595},
      ISBN = {3-540-43566-2},
}

\bib{Neb12}{article}{
AUTHOR = {Nebbia, Claudio},
     TITLE = {Cohomology for groups of isometries of regular trees},
   JOURNAL = {Expo. Math.},
  FJOURNAL = {Expositiones Mathematicae},
    VOLUME = {30},
      YEAR = {2012},
    NUMBER = {1},
     PAGES = {1--10},
      ISSN = {0723-0869},
   MRCLASS = {20J06 (20E08 22D10 43A65)},
  MRNUMBER = {2899653},
MRREVIEWER = {Peter A. Linnell},
       DOI = {10.1016/j.exmath.2011.06.001},
       URL = {http://dx.doi.org/10.1016/j.exmath.2011.06.001},
}
		
\bib{Ols75}{article}{
AUTHOR = {Ol'shanskii, G. I.},
     TITLE = {The representations of the automorphism group of a tree},
   JOURNAL = {Uspehi Mat. Nauk},
  FJOURNAL = {Akademiya Nauk SSSR i Moskovskoe Matematicheskoe Obshchestvo.
              Uspekhi Matematicheskikh Nauk},
    VOLUME = {30},
      YEAR = {1975},
    NUMBER = {3(183)},
     PAGES = {169--170},
      ISSN = {0042-1316},
   MRCLASS = {22E50},
  MRNUMBER = {0470142 (57 \#9909)},
MRREVIEWER = {J. S. Joel},
}
		
\bib{Ols77}{article}{
AUTHOR = {Ol'shanskii, G. I.},
     TITLE = {Classification of the irreducible representations of the
              automorphism groups of {B}ruhat-{T}its trees},
   JOURNAL = {Funkcional. Anal. i Prilozen.},
  FJOURNAL = {Akademija Nauk SSSR. Funkcional nyi Analiz i ego
              Prilozenija},
    VOLUME = {11},
      YEAR = {1977},
    NUMBER = {1},
     PAGES = {32--42, 96},
      ISSN = {0374-1990},
   MRCLASS = {22D10 (22E50)},
  MRNUMBER = {0578650 (58 \#28255)},
}

\bib{Gert}{book}{
    AUTHOR = {Pedersen, Gert K.},
     TITLE = {{$C\sp{\ast} $}-algebras and their automorphism groups},
    SERIES = {London Mathematical Society Monographs},
    VOLUME = {14},
 PUBLISHER = {Academic Press Inc. [Harcourt Brace Jovanovich Publishers]},
   ADDRESS = {London},
      YEAR = {1979},
     PAGES = {ix+416},
      ISBN = {0-12-549450-5},
   MRCLASS = {46Lxx},
  MRNUMBER = {548006 (81e:46037)},
MRREVIEWER = {J. W. Bunce},
}

\bib{Pe11}{thesis}{
	author={Henrik Densing Petersen},
	title={{$L^2$}-Betti Numbers of Locally Compact Groups},
	note={Thesis, University of Copenhagen. arXiv:1104.3294},
	year={2013},
}

\bib{Tho64}{article}{
    AUTHOR = {Thoma, Elmar},
     TITLE = {\"{U}ber unit\"are {D}arstellungen abz\"ahlbarer, diskreter
              {G}ruppen},
   JOURNAL = {Math. Ann.},
  FJOURNAL = {Mathematische Annalen},
    VOLUME = {153},
      YEAR = {1964},
     PAGES = {111--138},
      ISSN = {0025-5831},
   MRCLASS = {22.65 (46.65)},
  MRNUMBER = {0160118 (28 \#3332)},
MRREVIEWER = {S. Sakai},
}

\end{biblist}
\end{bibdiv}

\end{document}